\documentclass[oneside,english]{amsart}
\usepackage[T1]{fontenc}
\usepackage[latin9]{inputenc}
\usepackage{color}
\usepackage{verbatim}
\usepackage{mathrsfs}
\usepackage{amstext}
\usepackage{amsthm}
\usepackage{amssymb}
\usepackage{esint}

\makeatletter
\numberwithin{equation}{section}
\numberwithin{figure}{section}
\theoremstyle{plain}
\newtheorem*{thm*}{\protect\theoremname}
\theoremstyle{plain}
\newtheorem*{cor*}{\protect\corollaryname}
\theoremstyle{plain}
\newtheorem{thm}{\protect\theoremname}
\theoremstyle{plain}

\theoremstyle{plain}
\newtheorem{lem}[thm]{\protect\lemmaname}
\theoremstyle{remark}
\newtheorem{rem}[thm]{\protect\remarkname}
\theoremstyle{definition}



\makeatother

\usepackage{babel}
\providecommand{\corollaryname}{Corollary}
\providecommand{\examplename}{Example}
\providecommand{\lemmaname}{Lemma}
\providecommand{\remarkname}{Remark}
\providecommand{\theoremname}{Theorem}

\begin{document}
\title[On the Carath\'eodory form in higher-order theory]{On the Carath\'eodory form in higher-order variational field theory}
\author{Zbyn\v{e}k Urban}
\address[Zbyn\v{e}k Urban]{Dpt. of Mathematics, Faculty of Civil Engineering, V\v{S}B-Technical University of Ostrava\\
Ludv\'{i}ka Pod\'{e}\v{s}t\v{e} 1875/17, 708 33 Ostrava,
Czech Republic}
\email[Corresponding author]{zbynek.urban@vsb.cz}
\author{Jana Voln\'a}
\address[Jana Voln\'a]{Dpt. of Mathematics, Faculty of Civil Engineering, V\v{S}B-Technical University of Ostrava\\
Ludv\'{i}ka Pod\'{e}\v{s}t\v{e} 1875/17, 708 33 Ostrava,
Czech Republic}
\email{jana.volna@vsb.cz}

\begin{abstract}
The Carath\'eodory form of the calculus of variations belongs to
the class of Lepage equivalents of first-order Lagrangians in field
theory. Here, this equivalent is generalized for second- and higher-order
Lagrangians by means of intrisic geometric operations applied to the
well-known Poincar\'e\textendash Cartan form and principal component
of Lepage forms, respectively. \textcolor{black}{For second-order
theory, our definition coincides with the previous result obtained
by Crampin and Saunders in a~different way.} The Carath\'eodory
equivalent of the Hilbert Lagrangian in general relativity is discussed.
\end{abstract}

\keywords{Carath\'eodory form; Poincar\'e-Cartan form; Lepage equivalent; fibered manifold; variational field theory.}
\subjclass[2010]{58A10; 58A20; 58E30; 70S05}

\maketitle

\section{Introduction\label{sec:1}}

In this note, we describe a~generalization of the Carath\'eodory
form of the calculus of variations in \emph{second-order }and, for
specific Lagrangians, in\emph{ higher-order} field theory. Our approach
is based on a geometric relationship between the Poincar\'e\textendash Cartan
and Carath\'eodory forms,\emph{ }and analysis of the corresponding
global properties. In \cite{CramSaun2}, Crampin and Saunders obtained
the Carath\'eodory form for second-order Lagrangians as a~certain
projection onto a~sphere bundle. Here, we confirm this result by
means of a different, straightforward method which furthermore allows
higher-order generalization. It is a~standard fact in the global
variational field theory that the local expressions,
\begin{equation}
\Theta_{\lambda}=\mathscr{L}\omega_{0}+\sum_{k=0}^{r-1}\left(\sum_{l=0}^{r-1-k}(-1)^{l}d_{p_{1}}\ldots d_{p_{l}}\frac{\partial\mathscr{L}}{\partial y_{j_{1}\ldots j_{k}p_{1}\ldots p_{l}i}^{\sigma}}\right)\omega_{j_{1}\ldots j_{k}}^{\sigma}\wedge\omega_{i},\label{eq:PoiCar}
\end{equation}
which generalize the well-known Poincar\'e\textendash Cartan form
of the calculus of variations, define, in general, differential form
$\Theta_{\lambda}$ globally for Lagrangians $\lambda=\mathscr{L}\omega_{0}$
of order $r=1$ and $r=2$ only; see Krupka \cite{Krupka-Lepage}
($\Theta_{\lambda}$ is known as the principal component of a~Lepage
equivalent of Lagrangian $\lambda$), and Hor\'ak and Kol\'a\v{r}
\cite{HorakKolar} (for higher-order Poincar\'e\textendash Cartan
morphisms). We show that if $\Theta_{\lambda}$ is globally defined
differential form for a~\emph{class} of Lagrangians of order $r\geq3$,
then a~higher-order Carath\'eodory equivalent for Lagrangians belonging
to this class naturally arises by means of geometric operations acting
on $\Theta_{\lambda}$. To this purpose, for order $r=3$ we analyze
conditions, which describe the obstructions for globally defined principal
components of Lepage equivalents \eqref{eq:PoiCar} (or, higher\textendash order
Poincar\'e\textendash Cartan forms). 

The above-mentioned differential forms are examples of \emph{Lepage
forms}; for a~comprehensive exposition and original references see
Krupka \cite{Handbook,Krupka-Book}\emph{. }Similarly as the well-known
Cartan form describes analytical mechanics in a~coordinate-independent
way, in variational field theory (or, calculus of variations for multiple-integral
problems) this role is played by Lepage forms, in general. These objects
define the same variational functional as it is prescribed by a~given
Lagrangian and, moreover, variational properties (as variations, extremals,
or Noether's type invariance) of the corresponding functional are
globally characterized in terms of geometric operations (such as the
exterior derivative and the Lie derivative) acting on integrands -
the Lepage equivalents of a~Lagrangian.

A concrete application of our result in second-order field theory
includes the Carath\'eodory equivalent of the Hilbert Lagrangian
in general relativity, which we determine and it will be further studied
in future works.

Basic underlying structures, well adapted to this paper, can be found
in Voln\'a and Urban \cite{Volna}. If $(U,\varphi)$, $\varphi=(x^{i})$,
is a chart on smooth manifold $X$, we set
\begin{equation*}
{\color{black}{\color{black}\omega_{0}}=dx^{1}\wedge\ldots\wedge dx^{n},}\qquad{\color{black}\omega_{j}}{\color{black}=i_{\partial/\partial x^{j}}\omega_{0}=\frac{1}{(n-1)!}\varepsilon_{ji_{2}\ldots i_{n}}dx^{i_{2}}\wedge\ldots\wedge dx^{i_{n}},}
\end{equation*}
where $\varepsilon_{i_{1}i_{2}\ldots i_{n}}$ is the Levi-Civita permutation
symbol. If $\pi:Y\rightarrow X$ is a~fibered manifold and $W$ an
open subset of $Y$, then there exists a~unique morphism $h:\Omega^{r}W\rightarrow\Omega^{r+1}W$
of exterior algebras of differential forms such that for any fibered
chart $(V,\psi)$, $\psi=(x^{i},y^{\sigma})$, where $V\subset W$,
and any differentiable function $f:W^{r}\rightarrow\mathbb{R}$, where
$W^{r}=(\pi^{r,0})^{-1}(W)$ and $\pi^{r,s}:J^{r}Y\rightarrow J^{s}Y$
the jet bundle projection, 
\[
hf=f\circ\pi^{r+1,r},\quad\quad hdf=(d_{i}f)dx^{i},
\]
where
\begin{equation}
d_{i}=\frac{\partial}{\partial x^{i}}+\sum_{j_{1}\leq\ldots\leq j_{k}}\frac{\partial}{\partial y_{j_{1}\ldots j_{k}}^{\sigma}}y_{j_{1}\ldots j_{k}i}^{\sigma}\label{eq:FormalDerivative}
\end{equation}
is the $i$-th formal derivative operator associated with $(V,\psi)$.
A~differential form $q$-form $\rho\in\Omega_{q}^{r}W$ satisfying
$h\rho=0$ is called \emph{contact}, and $\rho$ is generated by contact
$1$-forms
\begin{equation*}
\omega_{j_{1}\ldots j_{k}}^{\sigma}=dy_{j_{1}\ldots j_{k}}^{\sigma}-y_{j_{1}\ldots j_{k}s}^{\sigma}dx^{s},\qquad0\leq k\leq r-1.
\end{equation*}
Throughout, we use the standard geometric concepts: the exterior derivative
$d$, the contraction $i_{\Xi}\rho$ and the Lie derivative $\partial_{\Xi}\rho$
of a differential form $\rho$ with respect to a vector field $\Xi$,
and the pull-back operation $*$ acting on differential forms.

\section{Lepage equivalents in first- and second-order field theory}

By a~\textit{Lagrangian} $\lambda$ for a~fibered manifold $\pi:Y\rightarrow X$
of order $r$ we mean an element of the submodule $\Omega_{n,X}^{r}W$
of $\pi^{r}$-horizontal $n$-forms in the module of $n$-forms $\Omega_{n}^{r}W$,
defined on an open subset $W^{r}$ of the $r$-th jet prolongation
$J^{r}Y$. In a~fibered chart $(V,\psi)$, $\psi=(x^{i},y^{\sigma})$,
where $V\subset W$, Lagrangian $\lambda\in\Omega_{n,X}^{r}W$ has
an expression
\begin{equation}
\lambda=\mathscr{L}\omega_{0},\label{eq:Lagrangian}
\end{equation}
where $\omega_{0}=dx^{1}\wedge dx^{2}\wedge\ldots\wedge dx^{n}$ is
the (local) volume element, and $\mathscr{L}:V^{r}\rightarrow\mathbb{R}$
is the \textit{Lagrange function} associated to $\lambda$ and $(V,\psi)$.

An $n$-form $\rho\in\Omega_{n}^{s}W$ is called a\textit{~Lepage
equivalent} of $\lambda\in\Omega_{n,X}^{r}W$, if the following two
conditions are satisfied: 

(i) $(\pi^{q,s+1})^{*}h\rho=(\pi^{q,r})^{*}\lambda$ (i.e. $\rho$
is \textit{equivalent }with $\lambda$), and 

(ii) $hi_{\xi}d\rho=0$ for arbitrary $\pi^{s,0}$-vertical vector
field $\xi$ on $W^{s}$ (i.e. $\rho$ is a\textit{~Lepage form}). 

The following theorem describe the structure of the Lepage equivalent
of a~Lagrangian (see \cite{Krupka-Lepage,Handbook}).
\begin{thm}
\label{Thm:LepageEquiv}Let $\lambda\in\Omega_{n,X}^{r}W$ be a~Lagrangian
of order $r$ for $Y$, locally expressed by \eqref{eq:Lagrangian}
with respect to a~fibered chart $(V,\psi)$, $\psi=(x^{i},y^{\sigma})$.
An $n$-form $\rho\in\Omega_{n}^{s}W$ is a~Lepage equivalent of
$\lambda$ if and only if
\begin{equation}
(\pi^{s+1,s})^{*}\rho=\Theta_{\lambda}+d\mu+\eta,\label{eq:LepDecomp}
\end{equation}
where $n$-form $\Theta_{\lambda}$ is defined on $V^{2r-1}$ by \emph{\eqref{eq:PoiCar},}
$\mu$ is a~contact $(n-1)$-form, and an $n$-form $\eta$ has the
order of contactness $\geq2$.
\end{thm}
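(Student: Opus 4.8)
The plan is to argue within a fixed fibered chart $(V,\psi)$ and to exploit the canonical decomposition of an $n$-form $\beta$ on a jet prolongation by its order of contactness, $(\pi^{N+1,N})^{*}\beta=h\beta+p_{1}\beta+\dots+p_{n}\beta$. First I would record the elementary bookkeeping: (a) a horizontal form is annihilated by $i_{\xi}$ for every $\pi^{N,0}$-vertical $\xi$, since such $\xi$ has neither a $\partial/\partial x^{i}$ nor a $\partial/\partial y^{\sigma}$ component; (b) for $\pi^{N,0}$-vertical $\xi$ the contraction $i_{\xi}$ lowers the order of contactness by at least one; (c) since $(\pi^{N+1,N})^{*}d\omega_{j_{1}\dots j_{k}}^{\sigma}=-\omega_{j_{1}\dots j_{k}s}^{\sigma}\wedge dx^{s}$ is again $1$-contact, $d$ does not lower the order of contactness; (d) $h$ annihilates every form of order of contactness $\geq 1$. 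Combining (a)--(d), for any $n$-form $\rho$ on $W^{s}$ and any $\pi^{s,0}$-vertical $\xi$ one obtains (after the usual pull-back to $W^{s+1}$)
\[
h\,i_{\xi}d\rho=h\,i_{\xi}d(h\rho+p_{1}\rho),
\]
so that, granting (i) (which makes $h\rho$ represent $\lambda$), condition (ii) reduces to $h\,i_{\xi}d(h\rho+p_{1}\rho)=0$ for all such $\xi$, equivalently to the requirement that $p_{1}d(h\rho+p_{1}\rho)$ be a source form $F_{\sigma}\,\omega^{\sigma}\wedge\omega_{0}$ (no generators $\omega_{j_{1}\dots j_{k}}^{\sigma}$ with $k\geq 1$).

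For the implication ``$\Leftarrow$'' I would argue directly: let $(\pi^{s+1,s})^{*}\rho=\Theta_{\lambda}+d\mu+\eta$ with $\mu$ contact and $\eta$ of order of contactness $\geq 2$. Since $\mu$ is contact, $d\mu$ has order of contactness $\geq 1$ by (c), so $h\,d\mu=0$; also $h\eta=0$ and $h\Theta_{\lambda}$ represents $\lambda$; applying $h$ gives (i). For (ii): $d(d\mu)=0$; $d\eta$ has order of contactness $\geq 2$, so $h\,i_{\xi}d\eta=0$ by (b), (d); and $h\,i_{\xi}d\Theta_{\lambda}=0$ because the form $\Theta_{\lambda}$ defined by \eqref{eq:PoiCar} satisfies $p_{1}d\Theta_{\lambda}=E_{\sigma}(\mathscr{L})\,\omega^{\sigma}\wedge\omega_{0}$, a source form, hence is itself a Lepage form on $V^{2r-1}$ (Krupka \cite{Krupka-Lepage}, or a direct computation from \eqref{eq:PoiCar}). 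Thus $h\,i_{\xi}d\rho=0$.

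For the implication ``$\Rightarrow$'' — the substantial direction — I would assume $\rho$ satisfies (i), (ii), pull every form back to a common prolongation $J^{N}Y$ with $N$ large, and set $\Phi:=p_{1}\rho-p_{1}\Theta_{\lambda}$, a $1$-contact $n$-form (legitimate because, by (i), $h\rho$ and $h\Theta_{\lambda}$ both represent $\lambda$ and so coincide). Since $\Theta_{\lambda}$ is also a Lepage equivalent of $\lambda$ with $p_{k}\Theta_{\lambda}=0$ for $k\geq 2$, the reduced form of (ii) holds for both $\rho$ and $\Theta_{\lambda}$; subtracting and using $h\rho=h\Theta_{\lambda}$ gives $h\,i_{\xi}d\Phi=0$ for every $\pi^{N,0}$-vertical $\xi$. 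The key lemma below then furnishes a contact $(n-1)$-form $\mu$ with $\Phi=p_{1}d\mu$, whence
\[
(\pi^{N,s})^{*}\rho=h\rho+p_{1}\rho+\sum_{k\geq 2}p_{k}\rho=\Theta_{\lambda}+p_{1}d\mu+\sum_{k\geq 2}p_{k}\rho;
\]
writing $p_{1}d\mu=d\mu-\sum_{k\geq 2}p_{k}d\mu$ (legitimate as $h\,d\mu=0$) and absorbing all terms of order of contactness $\geq 2$ into one $n$-form $\eta$ yields \eqref{eq:LepDecomp}.

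The main obstacle is the key lemma: \emph{a $1$-contact $n$-form $\Phi$ with $h\,i_{\xi}d\Phi=0$ for all $\pi^{N,0}$-vertical $\xi$ (equivalently, $p_{1}d\Phi$ a source form) is locally of the form $\Phi=p_{1}d\mu$ for some contact $(n-1)$-form $\mu$.} I would prove it by induction on the jet order $m$ of $\Phi$, using the canonical representation $\Phi=\sum_{k=0}^{m}C_{\sigma}^{j_{1}\dots j_{k},\,i}\,\omega_{j_{1}\dots j_{k}}^{\sigma}\wedge\omega_{i}$ with $C$ symmetric in $j_{1},\dots,j_{k}$. Testing the source-form condition against contractions by $\partial/\partial y_{j_{1}\dots j_{m+1}}^{\sigma}$ — the highest-order contact generators occurring in $d\Phi$ — forces a definite symmetrization of the leading coefficient $C_{\sigma}^{j_{1}\dots j_{m},\,i}$ to be a formal derivative of order-$(m-1)$ data; subtracting $p_{1}d\mu_{m}$ for an explicit contact $(n-1)$-form $\mu_{m}$ built from that coefficient then lowers the order of $\Phi$ to $\leq m-1$, and iterating down to $m=0$, where the condition forces the remaining form to vanish, assembles $\mu=\mu_{m}+\mu_{m-1}+\dots$. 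The delicate point is purely combinatorial: controlling the index symmetrizations and the way $h$, $i_{\xi}$ and $d$ intertwine with the formal derivatives $d_{i}$ — in essence the computation that, for $r\geq 3$, is responsible for the non-tensorial transformation of \eqref{eq:PoiCar} between charts.
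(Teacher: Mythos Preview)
The paper does not actually prove this theorem: it is stated with the attribution ``see \cite{Krupka-Lepage,Handbook}'' and no argument is given. So there is no in-paper proof to compare against; what can be said is that your proposal follows precisely the line of argument found in those references (the canonical decomposition by order of contactness, reduction of the Lepage condition to the $1$-contact part, and the descent/partial-integration construction of $\mu$), and is sound in outline.

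Two small remarks on the write-up. First, your key lemma is stated with the hypothesis ``$p_{1}d\Phi$ is a source form'', whereas the conclusion $\Phi=p_{1}d\mu$ forces $p_{1}d\Phi=0$; it is worth making explicit that for a \emph{purely} $1$-contact $\Phi$ the source-form condition already implies $p_{1}d\Phi=0$ (this drops out of the same recursion you use for the descent, since the top constraint reads $\mathrm{Sym}_{j_{1}\ldots j_{m+1}}C_{\sigma}^{j_{1}\ldots j_{m},\,j_{m+1}}=0$ rather than ``equals a formal derivative of lower-order data'' --- your phrasing there is slightly off). Second, the algebraic heart of the descent is the Young-type decomposition asserting that a tensor symmetric in $j_{1},\ldots,j_{m}$ with vanishing full symmetrization over $j_{1},\ldots,j_{m},i$ lies in the image of $D\mapsto \mathrm{Sym}_{j_{1}\ldots j_{m}}D_{\sigma}^{j_{1}\ldots j_{m-1},[j_{m}i]}$; you gesture at this but it deserves an explicit statement, since it is exactly what lets you choose $\mu_{m}$ at each step.
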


$\Theta_{\lambda}$ is called the \emph{principal component} of the
Lepage form $\rho$ with respect to fibered chart $(V,\psi)$. In
general, decomposition \eqref{eq:LepDecomp} is \emph{not} uniquely
determined with respect to contact forms $\mu$, $\eta$, and the
principal component $\Theta_{\lambda}$ need \emph{not} define a~global
form on $W^{2r-1}$. Nevertheless, the Lepage equivalent $\rho$ satisfying
\eqref{eq:LepDecomp} is globally defined on $W^{s}$; moreover $E_{\lambda}=p_{1}d\rho$
is a~globally defined $(n+1)$-form on $W^{2r}$, called the \emph{Euler\textendash Lagrange
form} associated to $\lambda$.

We recall the known examples of Lepage equivalents of first- and second-order
Lagrangians, determined by means of additional requirements.
\begin{lem}
\textbf{\textup{\label{Lem:PrincipalLepEq}(Principal Lepage form)}}
\emph{(a)} For every Lagrangian $\lambda$ of order $r=1$, there
exists a unique Lepage equivalent $\Theta_{\lambda}$ of $\lambda$
on $W^{1}$, which is $\pi^{1,0}$-horizontal and has the order of
contactness $\leq1$. In a fibered chart $(V,\psi)$, $\Theta{}_{\lambda}$
has an expression 
\begin{equation}
\Theta_{\lambda}=\mathscr{L}\omega_{0}+\frac{\partial\mathscr{L}}{\partial y_{j}^{\sigma}}\omega^{\sigma}\wedge\omega_{j}.\label{eq:Poincare-Cartan}
\end{equation}

\emph{(b)} For every Lagrangian $\lambda$ of order $r=2$, there
exists a unique Lepage equivalent $\Theta_{\lambda}$ of $\lambda$
on $W^{3}$, which is $\pi^{3,1}$-horizontal and has the order of
contactness $\leq1$. In a fibered chart $(V,\psi)$, $\Theta{}_{\lambda}$
has an expression 

\begin{equation}
\Theta_{\lambda}=\mathscr{L}\omega_{0}+\left(\frac{\partial\mathscr{L}}{\partial y_{j}^{\sigma}}-d_{p}\frac{\partial\mathscr{L}}{\partial y_{pj}^{\sigma}}\right)\omega^{\sigma}\wedge\omega_{j}+\frac{\partial\mathscr{L}}{\partial y_{ij}^{\sigma}}\omega_{i}^{\sigma}\wedge\omega_{j}.\label{eq:Poincare-Cartan-2ndOrder}
\end{equation}
\end{lem}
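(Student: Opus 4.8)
I would prove (a) and (b) in parallel, separating existence from uniqueness. For existence, I would first specialize the general expression \eqref{eq:PoiCar}: setting $r=1$ leaves only the term $k=l=0$ and reproduces \eqref{eq:Poincare-Cartan}, while setting $r=2$ leaves the terms $(k,l)\in\{(0,0),(0,1),(1,0)\}$ and reproduces \eqref{eq:Poincare-Cartan-2ndOrder}; this is a direct substitution, and it exhibits $\Theta_\lambda$ as a well-defined $n$-form on $V^1$, respectively on $V^3$. Globality of $\Theta_\lambda$ on $W^1$, respectively $W^3$, follows from the fact recalled in Section~\ref{sec:1} that for $r\le 2$ the local forms \eqref{eq:PoiCar} patch together (equivalently, one checks directly that the coefficients $\partial\mathscr{L}/\partial y^\sigma_j$, resp. $\partial\mathscr{L}/\partial y^\sigma_j-d_p\,\partial\mathscr{L}/\partial y^\sigma_{pj}$ and $\partial\mathscr{L}/\partial y^\sigma_{ij}$, transform so that the right-hand sides of \eqref{eq:Poincare-Cartan}, \eqref{eq:Poincare-Cartan-2ndOrder} agree on chart overlaps). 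Then Theorem~\ref{Thm:LepageEquiv}, applied with $\mu=0$ and $\eta=0$, shows immediately that $\Theta_\lambda$ is a Lepage equivalent of $\lambda$; and the explicit formulas display all differentials among $dx^i,dy^\sigma$ only (resp. among $dx^i,dy^\sigma,dy^\sigma_j$) and all contact factors simple, namely $\omega^\sigma$ (resp. $\omega^\sigma$ and $\omega^\sigma_i$), so $\Theta_\lambda$ is $\pi^{1,0}$-horizontal (resp. $\pi^{3,1}$-horizontal) and of order of contactness $\le 1$.

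For uniqueness, I would let $\rho$ be a second Lepage equivalent on $W^1$ (resp. $W^3$) with these two properties and put $\alpha=\rho-\Theta_\lambda$. Condition (i) for $\rho$ and for $\Theta_\lambda$ gives $h\alpha=0$, so, having order of contactness $\le 1$, $\alpha$ is purely $1$-contact; it is globally defined and $\pi^{1,0}$-horizontal (resp. $\pi^{3,1}$-horizontal); and, by linearity of (ii), $h\,i_\xi d\alpha=0$ for every $\pi^{1,0}$-vertical (resp. $\pi^{3,0}$-vertical) vector field $\xi$. In a fibered chart these constraints force $\alpha=A^j_\sigma\,\omega^\sigma\wedge\omega_j$ when $r=1$, and $\alpha=A^j_\sigma\,\omega^\sigma\wedge\omega_j+B^{ij}_\sigma\,\omega^\sigma_i\wedge\omega_j$ when $r=2$, with coefficients functions on $V^1$, resp. $V^3$. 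Contracting $d\alpha$ with $\partial/\partial y^\sigma_j$ — and, for $r=2$, also with $\partial/\partial y^\sigma_{ij}$ and $\partial/\partial y^\sigma_{ijk}$ — and then applying $h$ kills every contact term and leaves only (linear combinations of) the coefficients of $\alpha$ multiplied by $\omega_0$; condition (ii) together with the global well-definedness of $\alpha$ then forces these coefficients to vanish, whence $\alpha=0$ and $\rho=\Theta_\lambda$.

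The substantive step is the uniqueness in case (b). For $r=1$ it is immediate, since a single contraction with $\partial/\partial y^\sigma_j$ followed by $h$ returns $A^j_\sigma\,\omega_0$, forcing $A^j_\sigma=0$. For $r=2$ the two families $A^j_\sigma$ and $B^{ij}_\sigma$ are coupled — contraction with $\partial/\partial y^\sigma_j$ produces the combination $A^j_\sigma+d_iB^{ij}_\sigma$, not $A^j_\sigma$ alone, while contraction with $\partial/\partial y^\sigma_{ij}$ and $\partial/\partial y^\sigma_{ijk}$ controls $B^{ij}_\sigma$ — so the conditions obtained in the three relevant jet degrees must be assembled in the right order, with careful bookkeeping of the symmetrization conventions for $y^\sigma_{ij}$ and $y^\sigma_{ijk}$, in order to rule out every nontrivial $\alpha$. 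The remaining ingredients — the combinatorial collapse of \eqref{eq:PoiCar}, the appeal to Theorem~\ref{Thm:LepageEquiv}, and the reading-off of horizontality and contactness — are routine.
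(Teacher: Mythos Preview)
The paper does not prove this lemma: it is stated without proof, prefaced by ``We recall the known examples of Lepage equivalents\ldots'' and attributed to the cited references (Garc\'ia, Krupka). So there is no proof in the paper to compare your sketch against; you are supplying what the authors deliberately omit. Your existence argument is sound---specialize \eqref{eq:PoiCar}, invoke Theorem~\ref{Thm:LepageEquiv} with $\mu=\eta=0$, and read off horizontality and contactness from the explicit formula---and your uniqueness argument for $r=1$ goes through exactly as written.

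For $r=2$, however, the uniqueness sketch has a real gap. Contraction of $d\alpha$ with $\partial/\partial y^{\sigma}_{ijk}$ contributes nothing: since $\alpha$ is $\pi^{3,1}$-horizontal, the only occurrences of $dy^{\sigma}_{ijk}$ in $d\alpha$ come from $dA$ and $dB$, and after $h$ these sit behind contact factors $\omega^{\nu}$ or $\omega^{\nu}_{i}$ and vanish. The informative contractions are with $\partial/\partial y^{\nu}_{k}$ and $\partial/\partial y^{\nu}_{kl}$ only, and they yield
\[
A^{k}_{\nu}+d_{j}B^{kj}_{\nu}=0,\qquad B^{kl}_{\nu}+B^{lk}_{\nu}=0.
\]
These force $B^{(ij)}_{\sigma}=0$ and tie $A$ to the divergence of $B$, but they do \emph{not} by themselves kill the antisymmetric part $B^{[ij]}_{\sigma}$: the $1$-contact, $\pi^{3,1}$-horizontal form $\alpha=B^{[ij]}_{\sigma}\omega^{\sigma}_{i}\wedge\omega_{j}-d_{j}B^{[kj]}_{\sigma}\,\omega^{\sigma}\wedge\omega_{k}$ passes all three of your contraction tests. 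Eliminating this residual freedom is exactly the delicate point of the second-order case (and the place where the analogous statement starts to fail for $r\ge3$); it needs more than the contractions you list---one typically goes back through Theorem~\ref{Thm:LepageEquiv}, identifies $\alpha$ with the $1$-contact part of $d\mu$ for some contact $(n-1)$-form $\mu$, and then uses the horizontality and order constraints to force the relevant piece to vanish. Your proposal flags this step as needing ``careful bookkeeping'' but gives no mechanism for removing $B^{[ij]}_{\sigma}$.
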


For $r=1$ and $r=2$, the principal component $\Theta_{\lambda}$
\eqref{eq:PoiCar} is a~\emph{globally defined} Lepage equivalent
of $\lambda$. We point out that for $r\geq3$ this is \emph{not}
true (see \cite{HorakKolar,Krupka-Lepage}). \eqref{eq:Poincare-Cartan}
is the well-known \emph{Poincar\'e-Cartan form} (cf. Garc\'ia \cite{Garcia}),
and it is generealized for second-order Lagrangians by globally defined
\emph{principal Lepage equivalent} \eqref{eq:Poincare-Cartan-2ndOrder}
on $W^{3}\subset J^{3}Y$.
\begin{lem}
\textbf{\textup{\label{Lem:Fundamental}(Fundamental Lepage form)}}
Let $\lambda\in\Omega_{n,X}^{1}W$ be a~Lagrangian of order 1 for
$Y$, locally expressed by \eqref{eq:Lagrangian}. There exists a~unique
Lepage equivalent $Z_{\lambda}\in\Omega_{n}^{1}W$ of $\lambda$,
which satisfies $Z_{h\rho}=(\pi^{1,0})^{*}\rho$ for any $n$-form
$\rho\in\Omega_{n}^{0}W$ on $W$ such that $h\rho=\lambda$. With
respect to a fibered chart $(V,\psi)$, $Z_{\lambda}$ has an expression
\begin{align}
Z_{\lambda} & =\mathscr{L}\omega_{0}+\sum_{k=1}^{n}\frac{1}{(n-k)!}\frac{1}{(k!)^{2}}\frac{\partial^{k}\mathscr{L}}{\partial y_{j_{1}}^{\sigma_{1}}\ldots\partial y_{j_{k}}^{\sigma_{k}}}\varepsilon_{j_{1}\ldots j_{k}i_{k+1}\ldots i_{n}}\label{eq:Fundamental}\\
 & \quad\cdot\omega^{\sigma_{1}}\land\ldots\wedge\omega^{\sigma_{k}}\wedge dx^{i_{k+1}}\wedge\ldots\wedge dx^{i_{n}}.\nonumber 
\end{align}
\end{lem}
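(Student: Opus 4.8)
The plan is to obtain existence, uniqueness and the coordinate expression \eqref{eq:Fundamental} simultaneously, by showing that the only possible candidate is $(\pi^{1,0})^{*}\rho$ and that this candidate works. The first ingredient is that the horizontalization $h$ is \emph{injective} on $\Omega_{n}^{0}W$. Indeed, writing an order-zero $n$-form in a fibered chart as a $C^{\infty}$-combination of the wedge monomials $dy^{\sigma_{1}}\wedge\ldots\wedge dy^{\sigma_{k}}\wedge dx^{i_{k+1}}\wedge\ldots\wedge dx^{i_{n}}$ and applying $h$ amounts to the substitution $dy^{\sigma}\mapsto y_{s}^{\sigma}dx^{s}$; the image of such a monomial is, up to sign, the $k\times k$ minor of the matrix $(y_{i}^{\sigma})$ with row set $\{\sigma_{1},\ldots,\sigma_{k}\}$ and column set complementary to $\{i_{k+1},\ldots,i_{n}\}$, times $\omega_{0}$, and distinct minors of a generic matrix are linearly independent polynomials. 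Hence $h\rho=0$ forces $\rho=0$, so for $\lambda=h\rho$ the form $\rho$ is uniquely determined by $\lambda$, the prescription $Z_{\lambda}:=(\pi^{1,0})^{*}\rho$ is unambiguous, it is manifestly globally defined on $W^{1}$ as the pullback of a form defined on all of $W$, and it is the only $n$-form that can satisfy $Z_{h\rho}=(\pi^{1,0})^{*}\rho$. This already yields uniqueness.

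Next I would check that $Z_{\lambda}=(\pi^{1,0})^{*}\rho$ is a Lepage equivalent of $\lambda$. Condition (i) follows from the compatibility of $h$ with the jet projections, namely $h\bigl((\pi^{1,0})^{*}\rho\bigr)=(\pi^{2,1})^{*}(h\rho)=(\pi^{2,1})^{*}\lambda$. Condition (ii) is immediate: for any $\pi^{1,0}$-vertical vector field $\xi$ on $W^{1}$ one has $i_{\xi}d\bigl((\pi^{1,0})^{*}\rho\bigr)=i_{\xi}(\pi^{1,0})^{*}(d\rho)=0$, since the contraction $i_{\xi}(\pi^{1,0})^{*}\alpha$ vanishes for any form $\alpha$ on $W$ and any $\pi^{1,0}$-vertical $\xi$; a fortiori $h\,i_{\xi}dZ_{\lambda}=0$. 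One can note in passing that $Z_{\lambda}$ and the Poincar\'e\textendash Cartan form \eqref{eq:Poincare-Cartan} have the same horizontal and first-contact parts, $p_{1}Z_{\lambda}=p_{1}\Theta_{\lambda}$, the difference $Z_{\lambda}-\Theta_{\lambda}$ being of order of contactness $\geq 2$.

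The remaining and most technical step is to verify that $(\pi^{1,0})^{*}\rho$ agrees with the right-hand side of \eqref{eq:Fundamental} in a fibered chart. For this I would substitute $dy^{\sigma}=\omega^{\sigma}+y_{s}^{\sigma}dx^{s}$ into every monomial of $\rho$ and collect terms according to the number of contact factors $\omega^{\sigma}$. The purely horizontal part is $h\rho=\mathscr{L}\omega_{0}$, and this expresses $\mathscr{L}$ as a polynomial in the $y_{s}^{\sigma}$ whose homogeneous component of degree $k$ is, as above, an alternating $k\times k$-minor-type expression carrying the chart coefficients of $\rho$; the $k$-contact component of $(\pi^{1,0})^{*}\rho$ carries exactly the same coefficients. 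The point is then that $\partial^{k}\mathscr{L}/\partial y_{j_{1}}^{\sigma_{1}}\ldots\partial y_{j_{k}}^{\sigma_{k}}$ recovers those coefficients, while the Levi-Civita symbol $\varepsilon_{j_{1}\ldots j_{k}i_{k+1}\ldots i_{n}}$ reinstates the horizontal factor $dx^{i_{k+1}}\wedge\ldots\wedge dx^{i_{n}}$; tracking the repeated antisymmetrizations produces precisely the normalizing constant $\frac{1}{(n-k)!}\frac{1}{(k!)^{2}}$. This bookkeeping with the permutation symbol is the only delicate part of the argument and is a finite, routine computation; with it in hand, \eqref{eq:Fundamental}, and hence the lemma, is proved. (For a first-order Lagrangian $\lambda$ not lying in the image of $h$ the right-hand side of \eqref{eq:Fundamental} still transforms invariantly under change of fibered chart, so $Z_{\lambda}$ is defined and is a Lepage equivalent of $\lambda$ on $W^{1}$ in general, with the stated projection property holding whenever it is meaningful.)
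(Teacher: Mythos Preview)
The paper does not supply a proof of this lemma: it is stated as a known result and attributed to \cite{Krupka-Fund.Lep.eq.} and \cite{Betounes}, so there is no argument in the paper to compare your proposal against.

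That said, your outline is essentially correct but covers only part of the statement. The injectivity of $h$ on $\Omega_{n}^{0}W$ (via the minor/polynomial argument) is valid and immediately yields both the well-definedness of $Z_{\lambda}:=(\pi^{1,0})^{*}\rho$ and the projection identity $Z_{h\rho}=(\pi^{1,0})^{*}\rho$; your verification of the Lepage conditions (i) and (ii) is clean, and the sketch of how the factor $\tfrac{1}{(n-k)!\,(k!)^{2}}$ arises from the repeated antisymmetrizations is the standard computation. The gap is one of scope: your primary construction only applies to Lagrangians in the image of $h\colon\Omega_{n}^{0}W\to\Omega_{n,X}^{1}W$, i.e.\ to those whose Lagrange function is polynomial of degree $\le n$ in the $y_{j}^{\sigma}$, whereas the lemma is stated for an arbitrary first-order $\lambda$. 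Your closing parenthetical (invariance of \eqref{eq:Fundamental} under change of fibered chart) does give global existence in the general case, but it does not recover the \emph{uniqueness} clause as formulated, since for $\lambda\notin\mathrm{im}\,h$ the condition ``$Z_{h\rho}=(\pi^{1,0})^{*}\rho$ for every $\rho$ with $h\rho=\lambda$'' is vacuous. In the source literature the uniqueness is really a statement about the natural \emph{operator} $\lambda\mapsto Z_{\lambda}$ rather than about a single form; to match the lemma precisely you would need to formulate and prove that version.
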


$Z_{\lambda}$ \eqref{eq:Fundamental} is known as the \emph{fundamental
Lepage form} \cite{Krupka-Fund.Lep.eq.}, \cite{Betounes}), and it
is characterized by the equivalence: $Z_{\lambda}$ is closed if and
only if $\lambda$ is trivial (i.e. the Euler\textendash Lagrange
expressions associated with $\lambda$ vanish identically). Recently,
the form \eqref{eq:Fundamental} was studied for variational problems
for submanifolds in \cite{UrbBra}, and applied for studying symmetries
and conservation laws in \cite{Javier}.
\begin{lem}
\textbf{\textup{\label{Lem:Caratheodory}(Carath\'eodory form) }}Let
$\lambda\in\Omega_{n,X}^{1}W$ be a~non-vanishing Lagrangian of order
1 for $Y$ \eqref{eq:Lagrangian}. Then a~differential $n$-form
$\Lambda_{\lambda}\in\Omega_{n}^{1}W$, locally expressed as
\begin{align}
\Lambda_{\lambda} & =\frac{1}{\mathscr{L}^{n-1}}\bigwedge_{j=1}^{n}\left(\mathscr{L}dx^{j}+\frac{\partial\mathscr{L}}{\partial y_{j}^{\sigma}}\omega^{\sigma}\right)\label{eq:CaratheodoryForm}\\
 & =\frac{1}{\mathscr{L}^{n-1}}\left(\mathscr{L}dx^{1}+\frac{\partial\mathscr{L}}{\partial y_{1}^{\sigma_{1}}}\omega^{\sigma_{1}}\right)\wedge\ldots\wedge\left(\mathscr{L}dx^{n}+\frac{\partial\mathscr{L}}{\partial y_{n}^{\sigma_{n}}}\omega^{\sigma_{n}}\right),\nonumber 
\end{align}
is a Lepage equivalent of $\lambda$.
\end{lem}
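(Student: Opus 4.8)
The plan is to show that $\Lambda_{\lambda}$ has precisely the structure prescribed by Theorem~\ref{Thm:LepageEquiv} for a Lepage equivalent, with the trivial choice of the correction terms. Abbreviate $\theta^{j}=\mathscr{L}\,dx^{j}+\frac{\partial\mathscr{L}}{\partial y_{j}^{\sigma}}\omega^{\sigma}$, so that, on the set where $\mathscr{L}\neq 0$, $\Lambda_{\lambda}=\mathscr{L}^{-(n-1)}\,\theta^{1}\wedge\ldots\wedge\theta^{n}$. Expanding this product by multilinearity of the exterior product and collecting the resulting terms according to the number $k$ of factors $\omega^{\sigma}$ they contain yields the canonical decomposition $\Lambda_{\lambda}=\sum_{k=0}^{n}p_{k}\Lambda_{\lambda}$ into $k$-contact components. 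The goal is to identify $p_{0}\Lambda_{\lambda}+p_{1}\Lambda_{\lambda}$ with the first-order Poincar\'e--Cartan form \eqref{eq:Poincare-Cartan} and to observe that the remainder has order of contactness $\geq 2$.

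First I would compute the $0$-contact part: the only contribution containing no $\omega^{\sigma}$ is $\mathscr{L}^{-(n-1)}\cdot\mathscr{L}^{n}\,dx^{1}\wedge\ldots\wedge dx^{n}=\mathscr{L}\omega_{0}=\lambda$, which already establishes $h\Lambda_{\lambda}=\lambda$, i.e. condition~(i) of a Lepage equivalent. Next I would compute the $1$-contact part: choosing the summand $\frac{\partial\mathscr{L}}{\partial y_{j}^{\sigma}}\omega^{\sigma}$ from the $j$-th factor and $\mathscr{L}\,dx^{i}$ from each of the remaining $n-1$ factors produces $\mathscr{L}^{-(n-1)}\cdot\mathscr{L}^{n-1}\frac{\partial\mathscr{L}}{\partial y_{j}^{\sigma}}\,dx^{1}\wedge\ldots\wedge\omega^{\sigma}\wedge\ldots\wedge dx^{n}$ with $\omega^{\sigma}$ in the $j$-th slot. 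Rearranging the factors and recalling $\omega_{j}=i_{\partial/\partial x^{j}}\omega_{0}$ gives the identity $dx^{1}\wedge\ldots\wedge\omega^{\sigma}\wedge\ldots\wedge dx^{n}=\omega^{\sigma}\wedge\omega_{j}$; the powers of $\mathscr{L}$ then cancel, and summation over $j$ yields $p_{1}\Lambda_{\lambda}=\frac{\partial\mathscr{L}}{\partial y_{j}^{\sigma}}\omega^{\sigma}\wedge\omega_{j}$.

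Comparing with \eqref{eq:Poincare-Cartan} we obtain $\Lambda_{\lambda}=\Theta_{\lambda}+\eta$, where $\Theta_{\lambda}$ is the Poincar\'e--Cartan form of $\lambda$ from Lemma~\ref{Lem:PrincipalLepEq}(a), and $\eta:=\sum_{k\geq 2}p_{k}\Lambda_{\lambda}$ is a sum of $k$-contact forms with $k\geq 2$, hence has order of contactness $\geq 2$. Thus the decomposition \eqref{eq:LepDecomp} holds with $s=1$ and $\mu=0$, and Theorem~\ref{Thm:LepageEquiv} shows that $\Lambda_{\lambda}$ is a Lepage equivalent of $\lambda$. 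That the local expression \eqref{eq:CaratheodoryForm} actually patches to a global form $\Lambda_{\lambda}\in\Omega_{n}^{1}W$ can be checked from the transformation rules of $\mathscr{L}\omega_{0}$ and of the $1$-forms $\theta^{j}$ under changes of fibered charts, or read off from the remark following Theorem~\ref{Thm:LepageEquiv} once the above decomposition is available in every fibered chart.

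The only genuinely computational point, and the step most prone to sign errors, is the evaluation of $p_{1}\Lambda_{\lambda}$: one must track the $j-1$ transpositions needed to move $\omega^{\sigma}$ past $dx^{1},\ldots,dx^{j-1}$ and verify that the factor $\mathscr{L}^{-(n-1)}$ cancels exactly against the $\mathscr{L}^{n-1}$ coming from the untouched factors, so that the $1$-contact term reproduces the Poincar\'e--Cartan one with coefficient $\partial\mathscr{L}/\partial y_{j}^{\sigma}$ and no leftover power of $\mathscr{L}$. The higher-contact terms $p_{k}\Lambda_{\lambda}$, $k\geq 2$, require no computation, since only their order of contactness enters the argument; the non-vanishing hypothesis on $\lambda$ is used exactly to make the factor $\mathscr{L}^{-(n-1)}$ (and hence $\Lambda_{\lambda}$ itself) well defined.
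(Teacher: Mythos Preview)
The paper does not actually prove this lemma: it is stated, together with Lemmas~\ref{Lem:PrincipalLepEq} and~\ref{Lem:Fundamental}, as one of the ``known examples of Lepage equivalents'' recalled from the literature (with a reference to Carath\'eodory~\cite{Caratheodory}), and no argument is supplied. Your proof is therefore not competing with anything in the text; it is a self-contained verification, and the core of it---expanding the wedge product, identifying the $0$- and $1$-contact components with $\mathscr{L}\omega_{0}$ and $\frac{\partial\mathscr{L}}{\partial y_{j}^{\sigma}}\omega^{\sigma}\wedge\omega_{j}$, and then invoking Theorem~\ref{Thm:LepageEquiv} with $\mu=0$---is correct and efficient.

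For comparison, when the paper does prove the analogous second-order statement (Theorem~\ref{thm:Main}\,(c)), it takes a different route: rather than appealing to Theorem~\ref{Thm:LepageEquiv}, it verifies the Lepage condition directly by computing $hi_{\xi}d\Lambda_{\lambda}$ for an arbitrary $\pi^{3,0}$-vertical vector field $\xi$ and checking that it vanishes. Your structural argument via the decomposition $\Lambda_{\lambda}=\Theta_{\lambda}+\eta$ is shorter and conceptually cleaner in first order; the paper's direct computation is heavier but does not rely on the characterization theorem.

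One small caveat: your second option for globality---``read off from the remark following Theorem~\ref{Thm:LepageEquiv}''---does not work as stated. That remark says a \emph{given} global Lepage equivalent admits the local decomposition~\eqref{eq:LepDecomp}; it does not assert that local forms satisfying~\eqref{eq:LepDecomp} patch together. Your first option (checking the transformation behaviour of $\mathscr{L}$ and of the $1$-forms $\theta^{j}$ under a change of fibered chart, exactly as the paper does in the proof of Theorem~\ref{thm:Main}\,(a)) is the right way to close that gap.
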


$\Lambda_{\lambda}$ \eqref{eq:Fundamental} is the well-known \emph{Carath\'eodory
form} (cf. \cite{Caratheodory}), associated to Lagrangian $\lambda\in\Omega_{n,X}^{1}W$,
which is nowhere zero. $\Lambda_{\lambda}$ is uniquely characterized
by the following properties: $\Lambda_{\lambda}$ is (i) a~Lepage
equivalent of $\lambda$, (ii) decomposable, (iii) $\pi^{1,0}$-horizontal
(i.e. semi-basic with respect to projection $\pi^{1,0}$).

\section{The Carath\'eodory form: second-order generalization}

Let $\lambda\in\Omega_{n,X}^{1}W$ be a~\emph{non-vanishing,} \emph{first-order}
Lagrangian on $W^{1}\subset J^{1}Y$. In the next lemma, we describe
a~new observation, showing that the Carath\'eodory form $\Lambda_{\lambda}$
\eqref{eq:CaratheodoryForm} arises from the Poincar\'e-Cartan form
$\Theta_{\lambda}$ \eqref{eq:Poincare-Cartan} by means of contraction
operations on differential forms with respect to the formal derivative
vector fields $d_{i}$ \eqref{eq:FormalDerivative}.
\begin{lem}
\label{lem:Car-PC}The Carath\'eodory form $\Lambda_{\lambda}$ \eqref{eq:CaratheodoryForm}
and the Poincar\'e-Cartan form $\Theta_{\lambda}$ \eqref{eq:Poincare-Cartan}
satisfy
\begin{align*}
\Lambda_{\lambda} & =\frac{1}{\mathscr{L}^{n-1}}\bigwedge_{j=1}^{n}(-1)^{n-j}i_{d_{n}}\ldots i_{d_{j+1}}i_{d_{j-1}}\ldots i_{d_{1}}\Theta_{\lambda}.
\end{align*}
\end{lem}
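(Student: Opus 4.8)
The plan is to reduce everything to computing, for each fixed $j\in\{1,\dots,n\}$, the contracted $1$-form
\[
\vartheta_{j}:=(-1)^{n-j}\,i_{d_{n}}\ldots i_{d_{j+1}}i_{d_{j-1}}\ldots i_{d_{1}}\Theta_{\lambda},
\]
and to show that $\vartheta_{j}=\mathscr{L}\,dx^{j}+(\partial\mathscr{L}/\partial y_{j}^{\sigma})\,\omega^{\sigma}$. Granting this, the exterior product $\bigwedge_{j=1}^{n}\vartheta_{j}$ is exactly $\bigwedge_{j=1}^{n}\bigl(\mathscr{L}\,dx^{j}+(\partial\mathscr{L}/\partial y_{j}^{\sigma})\,\omega^{\sigma}\bigr)$, and dividing by $\mathscr{L}^{n-1}$ (legitimate since $\lambda$ is non-vanishing) reproduces the Carath\'eodory form $\Lambda_{\lambda}$ by its very definition \eqref{eq:CaratheodoryForm}. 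I would first remark that each contraction $i_{d_{k}}\Theta_{\lambda}$ is meaningful on $W^{1}\subset J^{1}Y$: although $d_{k}$ in \eqref{eq:FormalDerivative} is the full formal-derivative operator, the form $\Theta_{\lambda}$ \eqref{eq:Poincare-Cartan} involves only $dx^{i}$ and $dy^{\sigma}$ (through $\omega^{\sigma}=dy^{\sigma}-y_{s}^{\sigma}dx^{s}$), so only the $\partial/\partial x^{k}$- and $\partial/\partial y^{\sigma}$-components of $d_{k}$ ever contribute.

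The computation rests on the single elementary observation that the formal derivatives annihilate the contact basis $1$-forms: $i_{d_{k}}\omega^{\sigma}=dy^{\sigma}(d_{k})-y_{s}^{\sigma}\,dx^{s}(d_{k})=y_{k}^{\sigma}-y_{k}^{\sigma}=0$ for all $k,\sigma$. Splitting $\Theta_{\lambda}=\mathscr{L}\omega_{0}+(\partial\mathscr{L}/\partial y_{i}^{\sigma})\,\omega^{\sigma}\wedge\omega_{i}$, I would contract the two summands separately. For the horizontal summand, $\omega_{0}=dx^{1}\wedge\dots\wedge dx^{n}$ only sees the $\partial/\partial x^{k}$-parts, and contracting successively with $d_{1},\dots,d_{j-1},d_{j+1},\dots,d_{n}$ leaves $dx^{j}$ up to a sign; a direct count shows that the skipped index produces a factor $-1$ at each of the $n-j$ contractions past slot $j$, so $i_{d_{n}}\ldots i_{d_{j+1}}i_{d_{j-1}}\ldots i_{d_{1}}(\mathscr{L}\omega_{0})=(-1)^{n-j}\mathscr{L}\,dx^{j}$. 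For the contact summand, the identity $i_{d_{k}}(\omega^{\sigma}\wedge\beta)=-\,\omega^{\sigma}\wedge i_{d_{k}}\beta$ (from $i_{d_{k}}\omega^{\sigma}=0$) lets me pull $\omega^{\sigma}$ out of all $n-1$ contractions at the cost of $(-1)^{n-1}$, leaving $(-1)^{n-1}(\partial\mathscr{L}/\partial y_{i}^{\sigma})\,\omega^{\sigma}\wedge\bigl(i_{d_{n}}\ldots i_{d_{j+1}}i_{d_{j-1}}\ldots i_{d_{1}}\omega_{i}\bigr)$; writing $\omega_{i}=i_{\partial/\partial x^{i}}\omega_{0}$, the surviving contraction is that of the top form $\omega_{0}$ against $\partial/\partial x^{i}$ together with all $\partial/\partial x^{k}$, $k\neq j$, which vanishes unless $i=j$ (otherwise an index is repeated) and, for $i=j$, equals $(-1)^{j-1}$ by the analogous elementary count.

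Collecting the two contributions and using $(-1)^{n-1}(-1)^{j-1}=(-1)^{n-j}$ gives
\[
i_{d_{n}}\ldots i_{d_{j+1}}i_{d_{j-1}}\ldots i_{d_{1}}\Theta_{\lambda}=(-1)^{n-j}\left(\mathscr{L}\,dx^{j}+\frac{\partial\mathscr{L}}{\partial y_{j}^{\sigma}}\omega^{\sigma}\right),
\]
hence $\vartheta_{j}=\mathscr{L}\,dx^{j}+(\partial\mathscr{L}/\partial y_{j}^{\sigma})\,\omega^{\sigma}$, and taking $\bigwedge_{j=1}^{n}$ and dividing by $\mathscr{L}^{n-1}$ finishes the proof. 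I do not expect any genuine obstacle here: the conceptual point is entirely carried by $i_{d_{k}}\omega^{\sigma}=0$, which collapses the whole expression, component by component, to the decomposable $1$-forms constituting $\Lambda_{\lambda}$; the only thing requiring care is the orientation/sign bookkeeping for the skipped index $j$ in the two multilinear contractions of $\omega_{0}$.
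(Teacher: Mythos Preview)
Your argument is correct and in fact cleaner than the paper's. Both proofs reduce to establishing the same intermediate identity
\[
i_{d_{n}}\ldots i_{d_{j+1}}i_{d_{j-1}}\ldots i_{d_{1}}\Theta_{\lambda}=(-1)^{n-j}\left(\mathscr{L}\,dx^{j}+\frac{\partial\mathscr{L}}{\partial y_{j}^{\sigma}}\omega^{\sigma}\right),
\]
but the routes differ. The paper first expands $\omega^{\sigma}=dy^{\sigma}-y_{s}^{\sigma}dx^{s}$, rewriting $\Theta_{\lambda}$ as $\bigl(\mathscr{L}-\sum_{k}(\partial\mathscr{L}/\partial y_{k}^{\sigma})y_{k}^{\sigma}\bigr)\omega_{0}+(\partial\mathscr{L}/\partial y_{j}^{\sigma})\,dy^{\sigma}\wedge\omega_{j}$, and then carries out the successive contractions explicitly, displaying the intermediate expressions $i_{d_{j-1}}\ldots i_{d_{1}}\Theta_{\lambda}$ and $i_{d_{j+1}}i_{d_{j-1}}\ldots i_{d_{1}}\Theta_{\lambda}$ before appealing to their inductive pattern. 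You instead isolate the single structural fact $i_{d_{k}}\omega^{\sigma}=0$, which lets $\omega^{\sigma}$ pass through all $n-1$ contractions at the cost of a global sign $(-1)^{n-1}$, so that only the elementary contractions of $\omega_{0}$ and of $\omega_{i}=i_{\partial/\partial x^{i}}\omega_{0}$ against the coordinate vectors remain. Your route explains \emph{why} the formula collapses (formal derivatives annihilate contact $1$-forms) and avoids the bookkeeping of the extra $y_{k}^{\sigma}$-terms that the paper has to cancel by hand; the paper's route, on the other hand, makes the inductive mechanism visible and transfers verbatim to the second-order computation in Theorem~\ref{thm:Main}, where the analogous identity $i_{d_{k}}\omega_{i}^{\sigma}=0$ would play the same simplifying role.
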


\begin{proof}
From the decomposable structure of $\Lambda_{\lambda}$, we see that
what is needed to show is the formula
\begin{equation*}
i_{d_{n}}\ldots i_{d_{j+1}}i_{d_{j-1}}\ldots i_{d_{1}}\Theta_{\lambda}=(-1)^{n-j}\left(\mathscr{L}dx^{j}+\frac{\partial\mathscr{L}}{\partial y_{j}^{\sigma}}\omega^{\sigma}\right)
\end{equation*}
for every $j$, $1\leq j\leq n$. Since $dx^{k}\wedge\omega_{j}=\delta_{j}^{k}\omega_{0}$,
the Poincar\'e-Cartan form is expressible as
\begin{align*}
\Theta_{\lambda} & =\mathscr{L}\omega_{0}+\frac{\partial\mathscr{L}}{\partial y_{j}^{\sigma}}\omega^{\sigma}\wedge\omega_{j}=\mathscr{L}\omega_{0}+\frac{\partial\mathscr{L}}{\partial y_{j}^{\sigma}}dy^{\sigma}\wedge\omega_{j}-\frac{\partial\mathscr{L}}{\partial y_{j}^{\sigma}}y_{k}^{\sigma}dx^{k}\wedge\omega_{j}\\
 & =\left(\mathscr{L}-\frac{\partial\mathscr{L}}{\partial y_{1}^{\sigma}}y_{1}^{\sigma}-\frac{\partial\mathscr{L}}{\partial y_{2}^{\sigma}}y_{2}^{\sigma}-\ldots-\frac{\partial\mathscr{L}}{\partial y_{n}^{\sigma}}y_{n}^{\sigma}\right)\omega_{0}+\frac{\partial\mathscr{L}}{\partial y_{j}^{\sigma}}dy^{\sigma}\wedge\omega_{j}.
\end{align*}
Applying the contraction operations to $\Theta_{\lambda}$, we obtain
by means of a straightforward computation for every $j$,
\begin{align*}
 & i_{d_{j-1}}\ldots i_{d_{1}}\Theta_{\lambda}=\left(\mathscr{L}-\frac{\partial\mathscr{L}}{\partial y_{j}^{\sigma}}y_{j}^{\sigma}-\ldots-\frac{\partial\mathscr{L}}{\partial y_{n}^{\sigma}}y_{n}^{\sigma}\right)dx^{j}\wedge\ldots\wedge dx^{n}\\
 & \quad\quad+(-1)^{j-1}\sum_{k=j}^{n}\frac{\partial\mathscr{L}}{\partial y_{k}^{\sigma}}dy^{\sigma}\wedge i_{d_{j-1}}\ldots i_{d_{1}}\omega_{k}\\
 & \quad\quad+\sum_{l=1}^{j-1}(-1)^{l-1}y_{l}^{\sigma}i_{d_{j-1}}\ldots i_{d_{l+1}}i_{d_{l-1}}\ldots i_{d_{1}}\left(\sum_{k=j}^{n}\frac{\partial\mathscr{L}}{\partial y_{k}^{\sigma}}\omega_{k}\right),
\end{align*}
and
\begin{align*}
 & i_{d_{j+1}}i_{d_{j-1}}\ldots i_{d_{1}}\Theta_{\lambda}\\
 & \quad=-\left(\mathscr{L}-\frac{\partial\mathscr{L}}{\partial y_{j}^{\sigma}}y_{j}^{\sigma}-\sum_{k=j+2}^{n}\frac{\partial\mathscr{L}}{\partial y_{k}^{\sigma}}y_{k}^{\sigma}\right)dx^{j}\wedge dx^{j+2}\wedge\ldots\wedge dx^{n}\\
 & \quad+(-1)^{j}\sum_{k=j}^{n}\frac{\partial\mathscr{L}}{\partial y_{k}^{\sigma}}dy^{\sigma}\wedge i_{d_{j+1}}i_{d_{j-1}}\ldots i_{d_{1}}\omega_{k}\\
 & \quad+\sum_{l=1}^{j-1}(-1)^{l-1}y_{l}^{\sigma}i_{d_{j+1}}i_{d_{j-1}}\ldots i_{d_{l+1}}i_{d_{l-1}}\ldots i_{d_{1}}\left(\frac{\partial\mathscr{L}}{\partial y_{j}^{\sigma}}\omega_{j}+\sum_{k=j+2}^{n}\frac{\partial\mathscr{L}}{\partial y_{k}^{\sigma}}\omega_{k}\right)\\
 & \quad+(-1)^{j-1}y_{j+1}^{\sigma}i_{d_{j-1}}\ldots i_{d_{1}}\left(\frac{\partial\mathscr{L}}{\partial y_{j}^{\sigma}}\omega_{j}+\sum_{k=j+2}^{n}\frac{\partial\mathscr{L}}{\partial y_{k}^{\sigma}}\omega_{k}\right).
\end{align*}
Following the inductive structure of the preceding expressions, we
get after the next $n-j-1$ steps,
\begin{align*}
 & i_{d_{n}}\ldots i_{d_{j+1}}i_{d_{j-1}}\ldots i_{d_{1}}\Theta_{\lambda}\\
 & \quad=(-1)^{n-j}\left(\mathscr{L}-\frac{\partial\mathscr{L}}{\partial y_{j}^{\sigma}}y_{j}^{\sigma}\right)dx^{j}+(-1)^{n-j}\frac{\partial\mathscr{L}}{\partial y_{j}^{\sigma}}dy^{\sigma}-(-1)^{n-j}\frac{\partial\mathscr{L}}{\partial y_{j}^{\sigma}}\sum_{k\neq j}y_{k}^{\sigma}dx^{k}\\
 & \quad=(-1)^{n-j}\left(\mathscr{L}dx^{j}+\frac{\partial\mathscr{L}}{\partial y_{j}^{\sigma}}\omega^{\sigma}\right),
\end{align*}
as required.
\end{proof}
An intrinsic nature of Lemma \ref{lem:Car-PC} indicates a~possible
extension of the Carath\'eodory form \eqref{eq:CaratheodoryForm}
for higher-order variational problems. We put
\begin{equation}
\Lambda_{\lambda}=\frac{1}{\mathscr{L}^{n-1}}\bigwedge_{j=1}^{n}(-1)^{n-j}i_{d_{n}}\ldots i_{d_{j+1}}i_{d_{j-1}}\ldots i_{d_{1}}\Theta_{\lambda},\label{eq:2nd-Caratheodory}
\end{equation}
where $\Theta_{\lambda}$ in \eqref{eq:2nd-Caratheodory} denotes
the principal Lepage equivalent \eqref{eq:Poincare-Cartan-2ndOrder}
of a~\emph{second-order} Lagrangian $\lambda$, and verify that formula
\eqref{eq:2nd-Caratheodory} defines a~global form.
\begin{thm}
\label{thm:Main}Let $\lambda\in\Omega_{n,X}^{2}W$ be a~non-vanishing
second-order Lagrangian on $W^{2}\subset J^{2}Y$. Then $\Lambda_{\lambda}$
satisfies:

\emph{(a)} Formula \eqref{eq:2nd-Caratheodory} defines an $n$-form
on $W^{3}\subset J^{3}Y$. 

\emph{(b)} If $\lambda\in\Omega_{n,X}^{2}W$ has an expression \eqref{eq:Lagrangian}
with respect to a~fibered chart $(V,\psi)$, $\psi=(x^{i},y^{\sigma})$,
such that $V\subset W$, then $\Lambda_{\lambda}$ \eqref{eq:2nd-Caratheodory}
is expressed by
\begin{align}
\Lambda_{\lambda} & =\frac{1}{\mathscr{L}^{n-1}}\bigwedge_{j=1}^{n}\left(\mathscr{L}dx^{j}+\left(\frac{\partial\mathscr{L}}{\partial y_{j}^{\sigma}}-d_{i}\frac{\partial\mathscr{L}}{\partial y_{ij}^{\sigma}}\right)\omega^{\sigma}+\frac{\partial\mathscr{L}}{\partial y_{ij}^{\sigma}}\omega_{i}^{\sigma}\right).\label{eq:2ndCaratheodoryExpression}
\end{align}

\emph{(c)} $\Lambda_{\lambda}\in\Omega_{n}^{3}W$ \eqref{eq:2nd-Caratheodory}
associated to a~second-order Lagrangian $\lambda\in\Omega_{n,X}^{2}W$
is a~Lepage equivalent of $\lambda$, which is decomposable and $\pi^{3,1}$-horizontal.
\end{thm}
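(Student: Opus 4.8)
The plan is to prove (b) first by evaluating the iterated contraction in \eqref{eq:2nd-Caratheodory} explicitly, and then to read off (a) and (c) from the resulting formula \eqref{eq:2ndCaratheodoryExpression} together with Theorem~\ref{Thm:LepageEquiv}. For (b), I would write the second-order principal Lepage form \eqref{eq:Poincare-Cartan-2ndOrder} as $\Theta_{\lambda}=\mathscr{L}\omega_{0}+\gamma^{m}\wedge\omega_{m}$, where
\[
\gamma^{m}=\left(\frac{\partial\mathscr{L}}{\partial y_{m}^{\sigma}}-d_{i}\frac{\partial\mathscr{L}}{\partial y_{im}^{\sigma}}\right)\omega^{\sigma}+\frac{\partial\mathscr{L}}{\partial y_{im}^{\sigma}}\omega_{i}^{\sigma}
\]
is a contact $1$-form. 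The point driving the computation is that every formal derivative field annihilates all contact $1$-forms, $i_{d_{k}}\omega_{j_{1}\ldots j_{l}}^{\sigma}=y_{j_{1}\ldots j_{l}k}^{\sigma}-y_{j_{1}\ldots j_{l}k}^{\sigma}=0$; hence in the $(n-1)$-fold contraction $i_{d_{n}}\ldots i_{d_{j+1}}i_{d_{j-1}}\ldots i_{d_{1}}$ each factor $\gamma^{m}$ passes through every $i_{d_{k}}$ with a sign $-1$, while on $\omega_{0}$ and on $\omega_{m}$ the operator $i_{d_{k}}$ acts as $i_{\partial/\partial x^{k}}$. A short bookkeeping then gives $i_{d_{n}}\ldots i_{d_{j+1}}i_{d_{j-1}}\ldots i_{d_{1}}\omega_{m}=0$ for $m\neq j$, $\,=(-1)^{j-1}$ for $m=j$, and $i_{d_{n}}\ldots i_{d_{j+1}}i_{d_{j-1}}\ldots i_{d_{1}}\omega_{0}=(-1)^{n-j}dx^{j}$, so that $(-1)^{n-j}i_{d_{n}}\ldots i_{d_{j+1}}i_{d_{j-1}}\ldots i_{d_{1}}\Theta_{\lambda}=\mathscr{L}dx^{j}+\gamma^{j}$; inserting this into \eqref{eq:2nd-Caratheodory} reproduces \eqref{eq:2ndCaratheodoryExpression}. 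This parallels the proof of Lemma~\ref{lem:Car-PC}, the only new feature being that the extra term $(\partial\mathscr{L}/\partial y_{im}^{\sigma})\omega_{i}^{\sigma}$ is itself contact and so merely rides along. In particular all coefficients in \eqref{eq:2ndCaratheodoryExpression} are functions on $J^{3}Y$ and the forms $\omega^{\sigma},\omega_{i}^{\sigma}$ live on $J^{1}Y\subset J^{3}Y$, so $\Lambda_{\lambda}\in\Omega_{n}^{3}W$ locally, which is the first half of~(a).

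I would then turn to the globality assertion in (a). The essential input is that the principal Lepage equivalent \eqref{eq:Poincare-Cartan-2ndOrder} of a second-order Lagrangian is already a \emph{global} $n$-form on $W^{3}$ (Lemma~\ref{Lem:PrincipalLepEq}(b)), so it remains only to check that the right-hand side of \eqref{eq:2nd-Caratheodory} is independent of the chosen fibered chart. Under a change of fibered coordinates $(x^{i},y^{\sigma})\mapsto(\bar{x}^{i},\bar{y}^{\sigma})$ one has $d_{i}=(\partial\bar{x}^{k}/\partial x^{i})\bar{d}_{k}$, so the $(n-1)$-fold contraction transforms through the adjugate of the Jacobian matrix $(\partial\bar{x}^{k}/\partial x^{i})$; forming the $n$-fold exterior product of the $1$-forms $(-1)^{n-j}i_{d_{n}}\ldots i_{d_{j+1}}i_{d_{j-1}}\ldots i_{d_{1}}\Theta_{\lambda}$ then produces the overall factor $(\det(\partial\bar{x}/\partial x))^{n-1}$ (via $\det(\mathrm{adj}\,M)=(\det M)^{n-1}$), which is precisely cancelled by the transformation law $\mathscr{L}=\bar{\mathscr{L}}\det(\partial\bar{x}/\partial x)$ in the prefactor $\mathscr{L}^{1-n}$. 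Hence \eqref{eq:2nd-Caratheodory} glues to an $n$-form on $W^{3}$. Once (c) is established, globality may alternatively be read off from the fact that $\Lambda_{\lambda}$ is singled out by the three chart-independent properties listed in~(c). The hard part will be the sign and determinant bookkeeping here and in the contraction identities above; the rest of the argument is forced.

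For (c): decomposability of $\Lambda_{\lambda}$ is immediate from \eqref{eq:2ndCaratheodoryExpression}, which exhibits $\Lambda_{\lambda}$ as $\mathscr{L}^{1-n}$ times an exterior product of $n$ $1$-forms; and each factor $\mathscr{L}dx^{j}+\gamma^{j}$ is a linear combination of $dx^{i}$, $\omega^{\sigma}$ and $\omega_{i}^{\sigma}$, hence $\pi^{3,1}$-horizontal, so $\Lambda_{\lambda}$ is $\pi^{3,1}$-horizontal. To see that $\Lambda_{\lambda}$ is a Lepage equivalent of $\lambda$, I would expand the product in \eqref{eq:2ndCaratheodoryExpression} by multilinearity: the term with no factor $\gamma^{j}$ equals $\mathscr{L}^{n}\omega_{0}$; the terms with exactly one $\gamma^{j}$ sum to $\mathscr{L}^{n-1}\sum_{j}\gamma^{j}\wedge\omega_{j}$, using $dx^{1}\wedge\ldots\wedge dx^{j-1}\wedge\gamma^{j}\wedge dx^{j+1}\wedge\ldots\wedge dx^{n}=\gamma^{j}\wedge\omega_{j}$; these two contributions together give $\mathscr{L}^{n-1}\Theta_{\lambda}$, whereas every remaining term is a wedge of at least two of the contact $1$-forms $\gamma^{j}$. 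Dividing by $\mathscr{L}^{n-1}$ yields $\Lambda_{\lambda}=\Theta_{\lambda}+\eta$ with $\eta$ of order of contactness $\geq2$, and Theorem~\ref{Thm:LepageEquiv}, applied with $\mu=0$, shows at once that $\Lambda_{\lambda}$ is a Lepage equivalent of $\lambda$; in particular $h\eta=0$, so $h\Lambda_{\lambda}=h\Theta_{\lambda}=\lambda$, consistently with $h\Lambda_{\lambda}=\mathscr{L}^{1-n}\bigwedge_{j}h(\mathscr{L}dx^{j})=\mathscr{L}\omega_{0}$.
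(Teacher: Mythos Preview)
Your argument is correct, and in parts (b) and (c) it is organized differently from the paper.

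For (b), the paper rewrites $\Theta_{\lambda}$ in terms of $dy^{\sigma}$ and $dy_{i}^{\sigma}$ (absorbing the $y^{\sigma}_{k}dx^{k}$, $y^{\sigma}_{ik}dx^{k}$ pieces into the $\omega_{0}$-coefficient) and then carries out a step-by-step induction on the successive contractions $i_{d_{1}},i_{d_{2}},\ldots$, tracking all intermediate expressions. Your observation that $i_{d_{k}}\omega_{j_{1}\ldots j_{l}}^{\sigma}=0$ lets you keep the contact $1$-form $\gamma^{m}$ intact throughout and reduces the computation to two elementary identities for $i_{d_{n}}\ldots\widehat{i_{d_{j}}}\ldots i_{d_{1}}$ acting on $\omega_{0}$ and $\omega_{m}$. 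This is genuinely shorter and is exactly the mechanism that makes the higher-order generalization \eqref{eq:HOCaratheodoryExpression} transparent.

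For (c), the paper verifies the Lepage condition directly: it computes $d\Lambda_{\lambda}$, contracts with an arbitrary $\pi^{3,0}$-vertical $\xi$, and shows $hi_{\xi}d\Lambda_{\lambda}=0$ by an explicit cancellation. Your route---expanding the product \eqref{eq:2ndCaratheodoryExpression} as $\Theta_{\lambda}+\eta$ with $\eta$ of contactness $\geq 2$ and invoking Theorem~\ref{Thm:LepageEquiv} with $\mu=0$---avoids that calculation entirely and makes the Lepage property structural rather than computational. The trade-off is that the paper's direct check is self-contained, whereas your argument relies on the ``if'' direction of Theorem~\ref{Thm:LepageEquiv}; since that theorem is already in place, your appeal to it is the more economical choice.

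For (a) the two arguments coincide: both exploit the globality of $\Theta_{\lambda}$ (Lemma~\ref{Lem:PrincipalLepEq}(b)), the transformation $d_{i}=(\partial\bar{x}^{k}/\partial x^{i})\bar{d}_{k}$, and the cancellation of the resulting determinant power against $\mathscr{L}^{1-n}$ via \eqref{eq:LagrangianTransform}.
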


\begin{proof}
1. Suppose $(V,\psi)$, $\psi=(x^{i},y^{\sigma})$, and $(\bar{V},\bar{\psi})$,
$\bar{\psi}=(\bar{x}^{i},\bar{y}^{\sigma})$, are two overlapping
fibered charts on $W$. For $\lambda\in\Omega_{n,X}^{2}W$, the corresponding
chart expressions $\lambda=\mathscr{L}\omega_{0}$ and $\lambda=\bar{\mathscr{L}}\bar{\omega}_{0}$
satisfy
\begin{equation}
\mathscr{L}=\left(\bar{\mathscr{L}}\circ\bar{\psi}^{-1}\circ\psi\right)\det\frac{\partial\bar{x}^{i}}{\partial x^{j}}.\label{eq:LagrangianTransform}
\end{equation}
Since the push-forward vector field $\bar{d}_{k}$,
\[
\bar{d}_{k}=\frac{\partial}{\partial\bar{x}^{k}}+\bar{y}_{k}^{\sigma}\frac{\partial}{\partial\bar{y}^{\sigma}}+\bar{y}_{kl}^{\sigma}\frac{\partial}{\partial\bar{y}_{l}^{\sigma}},
\]
of vector field $(\partial x^{i}/\partial\bar{x}^{k})d_{i}$ with
respect to the chart transformation $\bar{\psi}^{-1}\circ\psi$ satisfies
\[
(\bar{\psi}^{-1}\circ\psi)^{*}\left(i_{\bar{d}_{k}}\Theta_{\lambda}\right)=i_{\frac{\partial x^{i}}{\partial\bar{x}^{k}}d_{i}}\left((\bar{\psi}^{-1}\circ\psi)^{*}\Theta_{\lambda}\right)=\frac{\partial x^{i}}{\partial\bar{x}^{k}}i_{d_{i}}\left((\bar{\psi}^{-1}\circ\psi)^{*}\Theta_{\lambda}\right),
\]
and $\Theta_{\lambda}$ \eqref{eq:Poincare-Cartan-2ndOrder} is globally
defined, we get
\begin{align*}
 & (\bar{\psi}^{-1}\circ\psi)^{*}\frac{1}{\mathscr{\bar{L}}^{n-1}}\bigwedge_{j=1}^{n}(-1)^{n-j}i_{\bar{d}_{n}}\ldots i_{\bar{d}_{j+1}}i_{\bar{d}_{j-1}}\ldots i_{\bar{d}_{1}}\Theta_{\lambda}\\
 & \quad=\frac{1}{\left(\bar{\mathscr{L}}\circ\bar{\psi}^{-1}\circ\psi\right)^{n-1}}\bigwedge_{j=1}^{n}(-1)^{n-j}\frac{\partial x^{i_{1}}}{\partial\bar{x}^{1}}\ldots\frac{\partial x^{i_{j-1}}}{\partial\bar{x}^{j-1}}\frac{\partial x^{i_{j+1}}}{\partial\bar{x}^{j+1}}\ldots\frac{\partial x^{i_{n}}}{\partial\bar{x}^{n}}\\
 & \quad\quad\cdot i_{d_{i_{n}}}\ldots i_{d_{i_{j+1}}}i_{d_{i_{j-1}}}\ldots i_{d_{i_{1}}}\left((\bar{\psi}^{-1}\circ\psi)^{*}\Theta_{\lambda}\right)\\
 & \quad=\frac{1}{\left(\bar{\mathscr{L}}\circ\bar{\psi}^{-1}\circ\psi\right)^{n-1}}\left(\frac{\partial x^{i_{1}}}{\partial\bar{x}^{1}}\ldots\frac{\partial x^{i_{n}}}{\partial\bar{x}^{n}}\varepsilon_{i_{1}\ldots i_{n}}\right)^{n}\\
 & \quad\quad\cdot\bigwedge_{j=1}^{n}(-1)^{n-j}i_{d_{n}}\ldots i_{d_{j+1}}i_{d_{j-1}}\ldots i_{d_{1}}(\bar{\psi}^{-1}\circ\psi)^{*}\Theta_{\lambda}\\
 & \quad=\frac{1}{\left((\bar{\mathscr{L}}\circ\bar{\psi}^{-1}\circ\psi)\det\frac{\partial\bar{x}^{i}}{\partial x^{j}}\right)^{n-1}}\bigwedge_{j=1}^{n}(-1)^{n-j}i_{d_{n}}\ldots i_{d_{j+1}}i_{d_{j-1}}\ldots i_{d_{1}}(\bar{\psi}^{-1}\circ\psi)^{*}\Theta_{\lambda}\\
 & \quad=\frac{1}{\mathscr{L}^{n-1}}\bigwedge_{j=1}^{n}(-1)^{n-j}i_{d_{n}}\ldots i_{d_{j+1}}i_{d_{j-1}}\ldots i_{d_{1}}(\bar{\psi}^{-1}\circ\psi)^{*}\Theta_{\lambda}\\
 & \quad=\frac{1}{\mathscr{L}^{n-1}}\bigwedge_{j=1}^{n}(-1)^{n-j}i_{d_{n}}\ldots i_{d_{j+1}}i_{d_{j-1}}\ldots i_{d_{1}}\Theta_{\lambda},
\end{align*}
as required.

2. Analogously to the proof of Lemma \ref{lem:Car-PC}, we find a~chart
expression of $1$-form
\[
i_{d_{n}}\ldots i_{d_{j+1}}i_{d_{j-1}}\ldots i_{d_{1}}\Theta_{\lambda},
\]
where $\Theta_{\lambda}$ is the principal Lepage equivalent \eqref{eq:Poincare-Cartan-2ndOrder}.
Using $dx^{k}\wedge\omega_{j}=\delta_{j}^{k}\omega_{0}$, we have
\begin{align*}
\Theta_{\lambda} & =\left(\mathscr{L}-\left(\frac{\partial\mathscr{L}}{\partial y_{j}^{\sigma}}-d_{p}\frac{\partial\mathscr{L}}{\partial y_{pj}^{\sigma}}\right)y_{j}^{\sigma}-\frac{\partial\mathscr{L}}{\partial y_{ij}^{\sigma}}y_{ij}^{\sigma}\right)\omega_{0}\\
 & +\left(\frac{\partial\mathscr{L}}{\partial y_{j}^{\sigma}}-d_{p}\frac{\partial\mathscr{L}}{\partial y_{pj}^{\sigma}}\right)dy^{\sigma}\wedge\omega_{j}+\frac{\partial\mathscr{L}}{\partial y_{ij}^{\sigma}}dy_{i}^{\sigma}\wedge\omega_{j}.
\end{align*}
Then
\begin{align*}
 & i_{d_{j-1}}\ldots i_{d_{1}}\Theta_{\lambda}\\
 & \quad=\left(\mathscr{L}-\left(\frac{\partial\mathscr{L}}{\partial y_{k}^{\sigma}}-d_{p}\frac{\partial\mathscr{L}}{\partial y_{pk}^{\sigma}}\right)y_{k}^{\sigma}-\frac{\partial\mathscr{L}}{\partial y_{ik}^{\sigma}}y_{ik}^{\sigma}\right)dx^{j}\wedge\ldots\wedge dx^{n}\\
 & \quad+\sum_{l=1}^{j-1}(-1)^{l-1}\left(\left(\frac{\partial\mathscr{L}}{\partial y_{k}^{\sigma}}-d_{p}\frac{\partial\mathscr{L}}{\partial y_{pk}^{\sigma}}\right)y_{l}^{\sigma}+\frac{\partial\mathscr{L}}{\partial y_{ik}^{\sigma}}y_{il}^{\sigma}\right)i_{d_{j-1}}\ldots i_{d_{l+1}}i_{d_{l-1}}\ldots i_{d_{1}}\omega_{k}\\
 & \quad+(-1)^{j-1}\left(\left(\frac{\partial\mathscr{L}}{\partial y_{k}^{\sigma}}-d_{p}\frac{\partial\mathscr{L}}{\partial y_{pk}^{\sigma}}\right)dy^{\sigma}+\frac{\partial\mathscr{L}}{\partial y_{ik}^{\sigma}}dy_{i}^{\sigma}\right)\wedge\left(i_{d_{j-1}}\ldots i_{d_{1}}\omega_{k}\right)\\
 & \quad=\left(\mathscr{L}-\sum_{k=j}^{n}\left(\left(\frac{\partial\mathscr{L}}{\partial y_{k}^{\sigma}}-d_{p}\frac{\partial\mathscr{L}}{\partial y_{pk}^{\sigma}}\right)y_{k}^{\sigma}+\frac{\partial\mathscr{L}}{\partial y_{ik}^{\sigma}}y_{ik}^{\sigma}\right)\right)dx^{j}\wedge\ldots\wedge dx^{n}\\
 & \quad+\sum_{l=1}^{j-1}(-1)^{l-1}\sum_{k=j}^{n}\left(\left(\frac{\partial\mathscr{L}}{\partial y_{k}^{\sigma}}-d_{p}\frac{\partial\mathscr{L}}{\partial y_{pk}^{\sigma}}\right)y_{l}^{\sigma}+\frac{\partial\mathscr{L}}{\partial y_{ik}^{\sigma}}y_{il}^{\sigma}\right)i_{d_{j-1}}\ldots i_{d_{l+1}}i_{d_{l-1}}\ldots i_{d_{1}}\omega_{k}\\
 & \quad+(-1)^{j-1}\sum_{k=j}^{n}\left(\left(\frac{\partial\mathscr{L}}{\partial y_{k}^{\sigma}}-d_{p}\frac{\partial\mathscr{L}}{\partial y_{pk}^{\sigma}}\right)dy^{\sigma}+\frac{\partial\mathscr{L}}{\partial y_{ik}^{\sigma}}dy_{i}^{\sigma}\right)\wedge\left(i_{d_{j-1}}\ldots i_{d_{1}}\omega_{k}\right),
\end{align*}
and
\begin{align*}
 & i_{d_{j+1}}i_{d_{j-1}}\ldots i_{d_{1}}\Theta_{\lambda}\\
 & \quad=\Biggl(-\mathscr{L}+\sum_{\begin{array}{c}
k=j\\
k\neq j+1
\end{array}}^{n}\left(\left(\frac{\partial\mathscr{L}}{\partial y_{k}^{\sigma}}-d_{p}\frac{\partial\mathscr{L}}{\partial y_{pk}^{\sigma}}\right)y_{k}^{\sigma}+\frac{\partial\mathscr{L}}{\partial y_{ik}^{\sigma}}y_{ik}^{\sigma}\right)\Biggr)dx^{j}\wedge\bigwedge_{l=j+2}^{n}dx^{l}\\
 & \quad+\sum_{l=1}^{j-1}(-1)^{l-1}\sum_{\begin{array}{c}
k=j\\
k\neq j+1
\end{array}}^{n}\left(\left(\frac{\partial\mathscr{L}}{\partial y_{k}^{\sigma}}-d_{p}\frac{\partial\mathscr{L}}{\partial y_{pk}^{\sigma}}\right)y_{l}^{\sigma}+\frac{\partial\mathscr{L}}{\partial y_{ik}^{\sigma}}y_{il}^{\sigma}\right)\\
 & \quad\quad i_{d_{j+1}}i_{d_{j-1}}\ldots i_{d_{l+1}}i_{d_{l-1}}\ldots i_{d_{1}}\omega_{k}\\
 & \quad+(-1)^{j-1}\sum_{\begin{array}{c}
k=j\\
k\neq j+1
\end{array}}^{n}\left(\left(\frac{\partial\mathscr{L}}{\partial y_{k}^{\sigma}}-d_{p}\frac{\partial\mathscr{L}}{\partial y_{pk}^{\sigma}}\right)y_{j+1}^{\sigma}+\frac{\partial\mathscr{L}}{\partial y_{ik}^{\sigma}}y_{i,j+1}^{\sigma}\right)i_{d_{j-1}}\ldots i_{d_{1}}\omega_{k}\\
 & \quad+(-1)^{j}\sum_{k=j}^{n}\left(\left(\frac{\partial\mathscr{L}}{\partial y_{k}^{\sigma}}-d_{p}\frac{\partial\mathscr{L}}{\partial y_{pk}^{\sigma}}\right)dy^{\sigma}+\frac{\partial\mathscr{L}}{\partial y_{ik}^{\sigma}}dy_{i}^{\sigma}\right)\wedge\left(i_{d_{j+1}}i_{d_{j-1}}\ldots i_{d_{1}}\omega_{k}\right).
\end{align*}
After another $n-j-1$ steps we obtain
\begin{align*}
 & i_{d_{n}}\ldots i_{d_{j+1}}i_{d_{j-1}}\ldots i_{d_{1}}\Theta_{\lambda}\\
 & =(-1)^{n-j}\left(\mathscr{L}dx^{j}+\left(\frac{\partial\mathscr{L}}{\partial y_{j}^{\sigma}}-d_{p}\frac{\partial\mathscr{L}}{\partial y_{pj}^{\sigma}}\right)\omega^{\sigma}+\frac{\partial\mathscr{L}}{\partial y_{ij}^{\sigma}}\omega_{i}^{\sigma}\right).
\end{align*}

3. From \eqref{eq:2ndCaratheodoryExpression} it is evident that $\Lambda_{\lambda}$
\eqref{eq:2nd-Caratheodory} is decomposable, $\pi^{3,1}$-horizontal,
and obeys $h\Lambda_{\lambda}=\lambda$. It is sufficient to verify
that $\Lambda_{\lambda}$ is a~Lepage form, that is $hi_{\xi}d\Lambda_{\lambda}=0$
for arbitrary $\pi^{3,0}$-vertical vector field $\xi$ on $W^{3}\subset J^{3}Y$.
This follows, however, by means of a~straightforward computation
using chart expression \eqref{eq:2ndCaratheodoryExpression}. Indeed,
we have
\begin{align*}
 & d\Lambda_{\lambda}=(1-n)\frac{1}{\mathscr{L}^{n}}d\mathscr{L}\wedge\bigwedge_{j=1}^{n}\left(\mathscr{L}dx^{j}+\left(\frac{\partial\mathscr{L}}{\partial y_{j}^{\sigma}}-d_{i}\frac{\partial\mathscr{L}}{\partial y_{ij}^{\sigma}}\right)\omega^{\sigma}+\frac{\partial\mathscr{L}}{\partial y_{ij}^{\sigma}}\omega_{i}^{\sigma}\right)\\
 & \quad+\frac{1}{\mathscr{L}^{n-1}}\sum_{k=1}^{n}(-1)^{k-1}d\left(\mathscr{L}dx^{k}+\left(\frac{\partial\mathscr{L}}{\partial y_{k}^{\sigma}}-d_{i}\frac{\partial\mathscr{L}}{\partial y_{ik}^{\sigma}}\right)\omega^{\sigma}+\frac{\partial\mathscr{L}}{\partial y_{ik}^{\sigma}}\omega_{i}^{\sigma}\right)\\
 & \quad\wedge\bigwedge_{j\neq k}\left(\mathscr{L}dx^{j}+\left(\frac{\partial\mathscr{L}}{\partial y_{j}^{\sigma}}-d_{i}\frac{\partial\mathscr{L}}{\partial y_{ij}^{\sigma}}\right)\omega^{\sigma}+\frac{\partial\mathscr{L}}{\partial y_{ij}^{\sigma}}\omega_{i}^{\sigma}\right),
\end{align*}
and the contraction of $d\Lambda_{\lambda}$ with respect to $\pi^{3,0}$-vertical
vector field $\xi$ reads
\begin{align*}
 & i_{\xi}d\Lambda_{\lambda}=(1-n)\frac{1}{\mathscr{L}^{n}}i_{\xi}d\mathscr{L}\bigwedge_{j=1}^{n}\left(\mathscr{L}dx^{j}+\left(\frac{\partial\mathscr{L}}{\partial y_{j}^{\sigma}}-d_{i}\frac{\partial\mathscr{L}}{\partial y_{ij}^{\sigma}}\right)\omega^{\sigma}+\frac{\partial\mathscr{L}}{\partial y_{ij}^{\sigma}}\omega_{i}^{\sigma}\right)\\
 & \quad-(1-n)\frac{1}{\mathscr{L}^{n}}d\mathscr{L}\wedge\sum_{l=1}^{n}(-1)^{l-1}\frac{\partial\mathscr{L}}{\partial y_{jl}^{\sigma}}\xi_{j}^{\sigma}\\
 & \quad\quad\bigwedge_{j\neq l}\left(\mathscr{L}dx^{j}+\left(\frac{\partial\mathscr{L}}{\partial y_{j}^{\sigma}}-d_{i}\frac{\partial\mathscr{L}}{\partial y_{ij}^{\sigma}}\right)\omega^{\sigma}+\frac{\partial\mathscr{L}}{\partial y_{ij}^{\sigma}}\omega_{i}^{\sigma}\right)\\
 & \quad+\frac{1}{\mathscr{L}^{n-1}}\sum_{k=1}^{n}(-1)^{k-1}\left(i_{\xi}d\mathscr{L}dx^{k}+i_{\xi}d\left(\frac{\partial\mathscr{L}}{\partial y_{k}^{\sigma}}-d_{i}\frac{\partial\mathscr{L}}{\partial y_{ik}^{\sigma}}\right)\omega^{\sigma}\right.\\
 & \quad\left.-\left(\frac{\partial\mathscr{L}}{\partial y_{k}^{\sigma}}-d_{i}\frac{\partial\mathscr{L}}{\partial y_{ik}^{\sigma}}\right)\xi_{j}^{\sigma}dx^{j}+i_{\xi}d\frac{\partial\mathscr{L}}{\partial y_{ik}^{\sigma}}\omega_{i}^{\sigma}-\xi_{i}^{\sigma}d\frac{\partial\mathscr{L}}{\partial y_{ik}^{\sigma}}-\frac{\partial\mathscr{L}}{\partial y_{ik}^{\sigma}}\xi_{is}^{\sigma}dx^{s}\right)\\
 & \quad\quad\wedge\bigwedge_{j\neq k}\left(\mathscr{L}dx^{j}+\left(\frac{\partial\mathscr{L}}{\partial y_{j}^{\sigma}}-d_{i}\frac{\partial\mathscr{L}}{\partial y_{ij}^{\sigma}}\right)\omega^{\sigma}+\frac{\partial\mathscr{L}}{\partial y_{ij}^{\sigma}}\omega_{i}^{\sigma}\right)\\
 & \quad+\frac{1}{\mathscr{L}^{n-1}}\sum_{k=1}^{n}(-1)^{k-1}d\left(\mathscr{L}dx^{k}+\left(\frac{\partial\mathscr{L}}{\partial y_{k}^{\sigma}}-d_{i}\frac{\partial\mathscr{L}}{\partial y_{ik}^{\sigma}}\right)\omega^{\sigma}+\frac{\partial\mathscr{L}}{\partial y_{ik}^{\sigma}}\omega_{i}^{\sigma}\right)\\
 & \quad\quad\wedge\sum_{l<k}(-1)^{l-1}\frac{\partial\mathscr{L}}{\partial y_{il}^{\sigma}}\xi_{i}^{\sigma}\bigwedge_{j\neq k,l}\left(\mathscr{L}dx^{j}+\left(\frac{\partial\mathscr{L}}{\partial y_{j}^{\sigma}}-d_{i}\frac{\partial\mathscr{L}}{\partial y_{ij}^{\sigma}}\right)\omega^{\sigma}+\frac{\partial\mathscr{L}}{\partial y_{ij}^{\sigma}}\omega_{i}^{\sigma}\right)
\end{align*}
\begin{align*}
 & \quad+\frac{1}{\mathscr{L}^{n-1}}\sum_{k=1}^{n}(-1)^{k-1}d\left(\mathscr{L}dx^{k}+\left(\frac{\partial\mathscr{L}}{\partial y_{k}^{\sigma}}-d_{i}\frac{\partial\mathscr{L}}{\partial y_{ik}^{\sigma}}\right)\omega^{\sigma}+\frac{\partial\mathscr{L}}{\partial y_{ik}^{\sigma}}\omega_{i}^{\sigma}\right)\\
 & \quad\quad\wedge\sum_{l>k}(-1)^{l}\frac{\partial\mathscr{L}}{\partial y_{il}^{\sigma}}\xi_{i}^{\sigma}\bigwedge_{j\neq k,l}\left(\mathscr{L}dx^{j}+\left(\frac{\partial\mathscr{L}}{\partial y_{j}^{\sigma}}-d_{i}\frac{\partial\mathscr{L}}{\partial y_{ij}^{\sigma}}\right)\omega^{\sigma}+\frac{\partial\mathscr{L}}{\partial y_{ij}^{\sigma}}\omega_{i}^{\sigma}\right).
\end{align*}
Hence the horizontal part of $i_{\xi}d\Lambda_{\lambda}$ satisfies
\begin{align*}
hi_{\xi}d\Lambda_{\lambda} & =(1-n)i_{\xi}d\mathscr{L}\omega_{0}-(1-n)\frac{1}{\mathscr{L}}\sum_{l=1}^{n}\frac{\partial\mathscr{L}}{\partial y_{jl}^{\sigma}}\xi_{j}^{\sigma}d_{l}\mathscr{L}\omega_{0}+ni_{\xi}d\mathscr{L}\omega_{0}\\
 & -\sum_{k=1}^{n}\left(\xi_{i}^{\sigma}d_{k}\frac{\partial\mathscr{L}}{\partial y_{ik}^{\sigma}}+\frac{\partial\mathscr{L}}{\partial y_{ik}^{\sigma}}\xi_{ik}^{\sigma}\right)\omega_{0}-\sum_{k=1}^{n}\left(\frac{\partial\mathscr{L}}{\partial y_{k}^{\sigma}}-d_{i}\frac{\partial\mathscr{L}}{\partial y_{ik}^{\sigma}}\right)\xi_{k}^{\sigma}\omega_{0}\\
 & +\frac{1}{\mathscr{L}}\sum_{k=1}^{n}(-1)^{k-1}d_{s}\mathscr{L}dx^{s}\wedge dx^{k}\wedge\sum_{l<k}(-1)^{l-1}\frac{\partial\mathscr{L}}{\partial y_{il}^{\sigma}}\xi_{i}^{\sigma}\bigwedge_{j\neq k,l}dx^{j}\\
 & +\frac{1}{\mathscr{L}}\sum_{k=1}^{n}(-1)^{k-1}d_{s}\mathscr{L}dx^{s}\wedge dx^{k}\wedge\sum_{l>k}(-1)^{l}\frac{\partial\mathscr{L}}{\partial y_{il}^{\sigma}}\xi_{i}^{\sigma}\bigwedge_{j\neq k,l}dx^{j}\\
 & =-(1-n)\frac{1}{\mathscr{L}}\sum_{l=1}^{n}\frac{\partial\mathscr{L}}{\partial y_{jl}^{\sigma}}\xi_{j}^{\sigma}d_{l}\mathscr{L}\omega_{0}-\frac{1}{\mathscr{L}}\sum_{k=1}^{n}\sum_{l\neq k}\frac{\partial\mathscr{L}}{\partial y_{il}^{\sigma}}\xi_{i}^{\sigma}d_{s}\mathscr{L}dx^{s}\wedge\omega_{l}\\
 & =0,
\end{align*}
where the identity $dx^{k}\wedge\omega_{l}=\delta_{l}^{k}\omega_{0}$
is applied.
\end{proof}
Lepage equivalent $\Lambda_{\lambda}$ \eqref{eq:2nd-Caratheodory}
is said to be the \emph{Carath\'eodory form} associated to $\lambda\in\Omega_{n,X}^{2}W$.

\section{The Carath\'{e}odory form and principal Lepage equivalents in higher-order
theory}

We point out that in the proof of Theorem \ref{thm:Main}, (a), the
chart independence of formula \eqref{eq:2nd-Caratheodory} is based
on principal Lepage equivalent $\Theta_{\lambda}$ \eqref{eq:Poincare-Cartan-2ndOrder}
of a~second-order Lagrangian, which is defined \emph{globally}. Since
for a Lagrangian of order $r\geq3$, principal components of Lepage
equivalents are, in general, \emph{local} expressions (see the Introduction),
we are allowed to apply the definition \eqref{eq:2nd-Caratheodory}
for such class of Lagrangians of order $r$ over a~fibered manifold
which assure invariance of local expressions $\Theta_{\lambda}$ \eqref{eq:PoiCar}.

Consider now a~\emph{third-order} Lagrangian $\lambda\in\Omega_{n,X}^{3}W$.
Then the principal component $\Theta_{\lambda}$ of a~Lepage equivalent
of $\lambda$ reads
\begin{align}
\Theta_{\lambda} & =\mathscr{L}\omega_{0}+\left(\frac{\partial\mathscr{L}}{\partial y_{j}^{\sigma}}-d_{p}\frac{\partial\mathscr{L}}{\partial y_{pj}^{\sigma}}+d_{p}d_{q}\frac{\partial\mathscr{L}}{\partial y_{pqj}^{\sigma}}\right)\omega^{\sigma}\wedge\omega_{j}\label{eq:PrinComp3}\\
 & +\left(\frac{\partial\mathscr{L}}{\partial y_{kj}^{\sigma}}-d_{p}\frac{\partial\mathscr{L}}{\partial y_{kpj}^{\sigma}}\right)\omega_{k}^{\sigma}\wedge\omega_{j}+\frac{\partial\mathscr{L}}{\partial y_{klj}^{\sigma}}\omega_{kl}^{\sigma}\wedge\omega_{j}.\nonumber 
\end{align}
In the following lemma we describe conditions for invariance of \eqref{eq:PrinComp3}.
\begin{lem}
\label{lem:3rdCond}The following two conditions are equivalent:

\emph{(a)} $\Theta_{\lambda}$ satisfies
\[
(\bar{\psi}^{-1}\circ\psi)^{*}\bar{\Theta}_{\lambda}=\Theta_{\lambda}.
\]
\emph{(b)} For arbitrary two overlapping fibered charts on $Y$, $(V,\psi)$,
$\psi=(x^{i},y^{\sigma})$, and $(\bar{V},\bar{\psi})$, $\bar{\psi}=(\bar{x}^{i},\bar{y}^{\sigma})$,
\begin{equation}
d_{k}\left(\left(\frac{\partial\mathscr{L}}{\partial y_{l_{1}l_{2}k}^{\tau}}\frac{\partial x^{s}}{\partial\bar{x}^{p}}-\frac{\partial\mathscr{L}}{\partial y_{l_{1}l_{2}s}^{\tau}}\frac{\partial x^{k}}{\partial\bar{x}^{p}}\right)\frac{\partial^{2}\bar{x}^{p}}{\partial x^{l_{1}}\partial x^{l_{2}}}\frac{\partial y^{\tau}}{\partial\bar{y}^{\sigma}}\right)=0.\label{eq:Obstruction-3rd}
\end{equation}
\end{lem}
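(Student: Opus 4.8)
The strategy is to compute the difference $(\bar\psi^{-1}\circ\psi)^{*}\bar\Theta_{\lambda}-\Theta_{\lambda}$ explicitly in the chart $(V,\psi)$ and show that it vanishes if and only if the single term \eqref{eq:Obstruction-3rd} vanishes. Since $\Theta_{\lambda}$ in \eqref{eq:PrinComp3} is built from the horizontal volume piece $\mathscr{L}\omega_{0}$ and three families of contact-times-$\omega_{j}$ terms indexed by the order of contactness ($\omega^{\sigma}$, $\omega^{\sigma}_{k}$, $\omega^{\sigma}_{kl}$), I would organize the computation by comparing the two expressions one contact-order at a time. The volume piece transforms correctly by \eqref{eq:LagrangianTransform}; the real content is in how the contact forms $\bar\omega^{\sigma}$, $\bar\omega^{\sigma}_{k}$, $\bar\omega^{\sigma}_{kl}$ and the coefficients $\partial\bar{\mathscr{L}}/\partial\bar y^{\sigma}_{\ldots}$, together with their formal derivatives $\bar d_{p}$, re-express in the unbarred chart.

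First I would record the transformation laws for the generators of the contact ideal: $\bar\omega^{\sigma}=(\partial\bar y^{\sigma}/\partial y^{\tau})\,\omega^{\tau}$ (purely a point transformation since $y^{\sigma}$ depend only on $x,y$); $\bar\omega^{\sigma}_{k}$ is a combination of $\omega^{\tau}_{l}$ and $\omega^{\tau}$ with coefficients involving $\partial\bar x/\partial x$, $\partial\bar y/\partial y$ and one $x$-derivative of these; and $\bar\omega^{\sigma}_{kl}$ a combination of $\omega^{\tau}_{mn}$, $\omega^{\tau}_{m}$, $\omega^{\tau}$ whose top-order coefficient is again $(\partial\bar y^{\sigma}/\partial y^{\tau})\cdot(\partial x/\partial\bar x)(\partial x/\partial\bar x)$ and whose lower-order coefficients bring in first and second $x$-derivatives of the transition functions. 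Likewise $\bar\omega_{j}=(\det \partial x/\partial\bar x)\,(\partial\bar x^{i}/\partial x^{j})\,\omega_{i}$ up to contact terms, but here — since the remaining factors are already contact — only the horizontal part $(\partial \bar x^{p}/\partial x^{j})\omega_{p}$ contributes after multiplication. Substituting all this into $(\bar\psi^{-1}\circ\psi)^{*}\bar\Theta_{\lambda}$ and collecting terms by contact order, the order-$2$ part ($\omega^{\tau}_{kl}\wedge\omega_{j}$) matches $\Theta_{\lambda}$ automatically because $\partial\bar{\mathscr{L}}/\partial\bar y^{\tau}_{klj}$ transforms tensorially and no derivatives of transition functions can appear at top order; the order-$1$ part ($\omega^{\tau}_{k}\wedge\omega_{j}$) matches after using the chain rule relating $d_{p}(\partial\mathscr{L}/\partial y^{\sigma}_{kpj})$ in the two charts, with the second-derivative-of-transition-function contributions cancelling between the $\bar\omega^{\sigma}_{kl}$-term and the $\bar\omega^{\sigma}_{k}$-term — this is exactly the cancellation that already works in the $r=2$ case. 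The leftover, which does \emph{not} cancel automatically, is precisely a multiple of $\omega^{\sigma}\wedge\omega_{j}$ whose coefficient is \eqref{eq:Obstruction-3rd}: it comes from the order-$0$ projection of the $\bar\omega^{\sigma}_{kl}\wedge\bar\omega_{j}$ term, where $\bar\omega^{\sigma}_{kl}$ contributes an $\omega^{\tau}$-component carrying the second derivative $\partial^{2}\bar x^{p}/\partial x^{l_1}\partial x^{l_2}$, contracted against $\partial\bar{\mathscr{L}}/\partial\bar y^{\tau}_{kl j}$, and where the antisymmetrization hidden in $\bigl(\tfrac{\partial\mathscr{L}}{\partial y^{\tau}_{l_1l_2 k}}\tfrac{\partial x^{s}}{\partial\bar x^{p}}-\tfrac{\partial\mathscr{L}}{\partial y^{\tau}_{l_1l_2 s}}\tfrac{\partial x^{k}}{\partial\bar x^{p}}\bigr)$ arises from the mismatch between the index contracted into $\omega_{j}$ and the free form index. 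The outer $d_{k}$ appears because, when passing from $\bar d_{p}$-expressions to $d_{k}$-expressions in the order-$1$ coefficients, one integrates the transformation of $\bar\omega^{\sigma}_{kl}$ by parts in the sense of the formal derivative, leaving $d_{k}$ acting on the whole bracket.

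Having isolated that the single obstruction to $(\bar\psi^{-1}\circ\psi)^{*}\bar\Theta_{\lambda}=\Theta_{\lambda}$ is the vanishing of \eqref{eq:Obstruction-3rd} for every pair of overlapping charts, the equivalence (a)$\Leftrightarrow$(b) follows: (b)$\Rightarrow$(a) is the substitution just described, and (a)$\Rightarrow$(b) follows because the coefficient of $\omega^{\sigma}\wedge\omega_{j}$ in the difference is, up to an invertible factor, exactly \eqref{eq:Obstruction-3rd}, so if the difference vanishes identically this coefficient must vanish. I expect the main obstacle to be bookkeeping: correctly deriving the transformation law of $\bar\omega^{\sigma}_{kl}$ (it is the only place a \emph{second} derivative of the transition functions enters) and then tracking that second-derivative term through the wedge with $\bar\omega_{j}$ and the $h$-projection without losing or double-counting the antisymmetrization in $(l_1,l_2,k)$ versus $(l_1,l_2,s)$; once that term is pinned down the rest is the same cancellation pattern already validated for $r=1,2$.
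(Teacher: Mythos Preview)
Your proposal is correct and follows essentially the same approach as the paper: write down the chart-transformation formulas for $\bar x^{k},\bar y^{\sigma},\bar y^{\sigma}_{k},\bar y^{\sigma}_{ij},\bar y^{\sigma}_{ijk}$, together with the induced transformation identities for $\bar\omega_{j}$, $\bar\omega^{\sigma}$, $\bar\omega^{\sigma}_{i}$, $\bar\omega^{\sigma}_{ij}$, substitute into $\bar\Theta_{\lambda}$ and compare with $\Theta_{\lambda}$. The paper's proof is a bare sketch that only lists these formulas; your contact-order-by-contact-order organization and your identification of the obstruction as the residual $\omega^{\sigma}\wedge\omega_{s}$ coefficient is exactly the computation that the paper leaves to the reader.
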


\begin{proof}
Equivalence conditions (a) and (b) follows from the chart transformation
\begin{align*}
 & \bar{x}^{k}=\bar{x}^{k}(x^{s}),\quad\bar{y}^{\sigma}=\bar{y}^{\sigma}(x^{s},y^{\nu}),\\
 & \bar{y}_{k}^{\sigma}=\frac{d}{d\bar{x}^{k}}(\bar{y}^{\sigma})=\frac{\partial x^{s}}{\partial\bar{x}^{k}}\frac{\partial\bar{y}^{\sigma}}{\partial x^{s}}+\frac{\partial x^{s}}{\partial\bar{x}^{k}}\frac{\partial\bar{y}^{\sigma}}{\partial y^{\nu}}y_{s}^{\nu},\\
 & \bar{y}_{ij}^{\sigma}=\frac{d}{d\bar{x}^{j}}(\bar{y}_{i}^{\sigma})=\frac{\partial x^{t}}{\partial\bar{x}^{j}}\frac{d}{dx^{t}}\left(\frac{\partial x^{s}}{\partial\bar{x}^{i}}\frac{\partial\bar{y}^{\sigma}}{\partial x^{s}}+\frac{\partial x^{s}}{\partial\bar{x}^{i}}\frac{\partial\bar{y}^{\sigma}}{\partial y^{\nu}}y_{s}^{\nu}\right),\\
 & \bar{y}_{ijk}^{\sigma}=\frac{d}{d\bar{x}^{k}}(\bar{y}_{ij}^{\sigma})=\frac{\partial x^{r}}{\partial\bar{x}^{k}}\frac{d}{dx^{r}}\left(\frac{\partial x^{t}}{\partial\bar{x}^{j}}\frac{d}{dx^{t}}\left(\frac{\partial x^{s}}{\partial\bar{x}^{i}}\frac{\partial\bar{y}^{\sigma}}{\partial x^{s}}+\frac{\partial x^{s}}{\partial\bar{x}^{i}}\frac{\partial\bar{y}^{\sigma}}{\partial y^{\nu}}y_{s}^{\nu}\right)\right),
\end{align*}
applied to $\Theta_{\lambda}$, where Lagrange function $\mathscr{L}$
is transformed by \eqref{eq:LagrangianTransform} and the following
identities are employed,
\begin{align*}
 & \bar{\omega}_{j}=\frac{\partial x^{s}}{\partial\bar{x}^{j}}\det\left(\frac{\partial\bar{x}^{p}}{\partial x^{q}}\right)\omega_{s},\\
 & \bar{\omega}^{\sigma}=\frac{\partial\bar{y}^{\sigma}}{\partial y^{\nu}}\omega^{\nu},\quad\bar{\omega}_{i}^{\sigma}=\frac{\partial\bar{y}_{i}^{\sigma}}{\partial y^{\nu}}\omega^{\nu}+\frac{\partial\bar{y}_{i}^{\sigma}}{\partial y_{p}^{\nu}}\omega_{p}^{\nu},\quad\bar{\omega}_{ij}^{\sigma}=\frac{\partial\bar{y}_{ij}^{\sigma}}{\partial y^{\nu}}\omega^{\nu}+\frac{\partial\bar{y}_{ij}^{\sigma}}{\partial y_{p}^{\nu}}\omega_{p}^{\nu}+\frac{\partial\bar{y}_{ij}^{\sigma}}{\partial y_{pq}^{\nu}}\omega_{pq}^{\nu}.
\end{align*}
\end{proof}
\begin{thm}
\label{thm:Main-3rd}Suppose that a~fibered manifold $\pi:Y\rightarrow X$
and a~non-vanishing third-order Lagrangian $\lambda\in\Omega_{n,X}^{3}W$
satisfy condition \emph{\eqref{eq:Obstruction-3rd}}. Then $\Lambda_{\lambda}$
\emph{\eqref{eq:2nd-Caratheodory}}, where $\Theta_{\lambda}$ is
given by \emph{\eqref{eq:PrinComp3},} defines a~differential $n$-form
on $W^{5}\subset J^{5}Y$, which is a~Lepage equivalent of $\lambda$,
decomposable and $\pi^{5,2}$-horizontal. In a~fibered chart $(V,\psi)$,
$\psi=(x^{i},y^{\sigma})$, on W, $\Lambda_{\lambda}$ has an expression

\begin{align}
\Lambda_{\lambda} & =\frac{1}{\mathscr{L}^{n-1}}\bigwedge_{j=1}^{n}\left(\mathscr{L}dx^{j}+\left(\frac{\partial\mathscr{L}}{\partial y_{j}^{\sigma}}-d_{p}\frac{\partial\mathscr{L}}{\partial y_{pj}^{\sigma}}+d_{p}d_{q}\frac{\partial\mathscr{L}}{\partial y_{pqj}^{\sigma}}\right)\omega^{\sigma}\right.\label{eq:3rdCaratheodoryExpression}\\
 & \qquad\qquad\qquad+\left.\left(\frac{\partial\mathscr{L}}{\partial y_{ij}^{\sigma}}-d_{p}\frac{\partial\mathscr{L}}{\partial y_{ipj}^{\sigma}}\right)\omega_{i}^{\sigma}+\frac{\partial\mathscr{L}}{\partial y_{ikj}^{\sigma}}\omega_{ik}^{\sigma}\right)\nonumber 
\end{align}
\end{thm}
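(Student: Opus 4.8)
The plan is to follow the three steps of the proof of Theorem~\ref{thm:Main}, the only genuinely new feature being that for order $r=3$ the principal component $\Theta_{\lambda}$ \eqref{eq:PrinComp3} of a Lagrangian is a~priori a local object. For part~(a), Lemma~\ref{lem:3rdCond} tells us that hypothesis \eqref{eq:Obstruction-3rd} is precisely the condition under which $\Theta_{\lambda}$ is a globally defined $n$-form on $W^{5}$. Granting this, I would reproduce part~1 of the proof of Theorem~\ref{thm:Main} verbatim: the push-forward of $(\partial x^{i}/\partial\bar{x}^{k})d_{i}$ along a chart transition is $\bar{d}_{k}$, so the iterated contractions transform covariantly, and the product of the $n$ Jacobian determinants produced by the wedge factors is absorbed by the weight $\mathscr{L}^{1-n}$ through the transformation rule \eqref{eq:LagrangianTransform}. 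Hence \eqref{eq:2nd-Caratheodory}, with $\Theta_{\lambda}$ given by \eqref{eq:PrinComp3}, defines an $n$-form $\Lambda_{\lambda}$ on $W^{5}\subset J^{5}Y$.

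For part~(b), I would carry out the computation of part~2 of the proof of Theorem~\ref{thm:Main} with \eqref{eq:PrinComp3} in place of \eqref{eq:Poincare-Cartan-2ndOrder}. Using $dx^{k}\wedge\omega_{j}=\delta^{k}_{j}\omega_{0}$ and $\omega_{j_{1}\ldots j_{k}}^{\sigma}=dy_{j_{1}\ldots j_{k}}^{\sigma}-y_{j_{1}\ldots j_{k}s}^{\sigma}dx^{s}$ one first rewrites $\Theta_{\lambda}$ as a sum of an $\omega_{0}$-term and terms of the shape $(\cdots)\,dy^{\sigma}\wedge\omega_{j}$, $(\cdots)\,dy_{i}^{\sigma}\wedge\omega_{j}$, $(\cdots)\,dy_{ik}^{\sigma}\wedge\omega_{j}$; the iterated contraction $i_{d_{n}}\ldots i_{d_{j+1}}i_{d_{j-1}}\ldots i_{d_{1}}$ then collapses this expression step by step --- the new $dy_{ik}^{\sigma}$-term behaving exactly as the $dy_{i}^{\sigma}$-term did in the second-order case --- to $(-1)^{n-j}$ times the one-form sitting in the $j$-th factor of \eqref{eq:3rdCaratheodoryExpression}. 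Substituting this back into \eqref{eq:2nd-Caratheodory} yields \eqref{eq:3rdCaratheodoryExpression}.

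For part~(c), formula \eqref{eq:3rdCaratheodoryExpression} already exhibits $\Lambda_{\lambda}$ as a product of $n$ one-forms, each built from $dx^{j}$, $\omega^{\sigma}$, $\omega_{i}^{\sigma}$, $\omega_{ik}^{\sigma}$ only, times $\mathscr{L}^{1-n}$; hence $\Lambda_{\lambda}$ is decomposable and $\pi^{5,2}$-horizontal, and applying $h$ gives $h\Lambda_{\lambda}=\mathscr{L}^{1-n}\bigwedge_{j=1}^{n}\mathscr{L}dx^{j}=\lambda$. To check the Lepage property I would expand the wedge product and sort by order of contactness: the $0$-contact part is $\lambda$ and the $1$-contact part coincides with that of $\Theta_{\lambda}$, so the two together recover $\Theta_{\lambda}$ \eqref{eq:PrinComp3}, whereas $\eta:=\Lambda_{\lambda}-\Theta_{\lambda}$ gathers all terms with at least two contact factors and thus has order of contactness $\geq2$. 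Consequently $(\pi^{6,5})^{*}\Lambda_{\lambda}=\Theta_{\lambda}+d\mu+\eta$ with $\mu=0$, and Theorem~\ref{Thm:LepageEquiv} gives immediately that $\Lambda_{\lambda}$ is a Lepage equivalent of $\lambda$; equivalently, one may verify $hi_{\xi}d\Lambda_{\lambda}=0$ for every $\pi^{5,0}$-vertical vector field $\xi$ by the direct computation of part~3 of the proof of Theorem~\ref{thm:Main}.

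I expect the genuine work to lie in part~(b): although structurally identical to the second-order computation, it requires careful control of the index ranges, of the alternating signs generated by the successive contractions, and of the summation and symmetry conventions hidden in the partial derivatives $\partial\mathscr{L}/\partial y_{\ldots}^{\sigma}$. Once \eqref{eq:3rdCaratheodoryExpression} is in hand, parts~(a) and~(c) are short, resting respectively on Lemma~\ref{lem:3rdCond} and on Theorem~\ref{Thm:LepageEquiv}.
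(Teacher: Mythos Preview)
Your proposal is correct and follows essentially the same route as the paper: the paper's proof is the single sentence ``This is an immediate consequence of Lemma~\ref{lem:3rdCond} and the procedure given by Lemma~\ref{lem:Car-PC} and Theorem~\ref{thm:Main},'' and you have spelled out exactly that procedure. Your alternative argument for the Lepage property in part~(c) --- observing that $\Lambda_{\lambda}-\Theta_{\lambda}$ has order of contactness $\geq 2$ and invoking Theorem~\ref{Thm:LepageEquiv} with $\mu=0$ --- is a clean shortcut that avoids repeating the lengthy direct verification of $hi_{\xi}d\Lambda_{\lambda}=0$; it is sound and arguably preferable to redoing the computation of part~3 of Theorem~\ref{thm:Main}.
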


\begin{proof}
This is an immediate consequence of Lemma \ref{lem:3rdCond} and the
procedure given by Lemma \ref{lem:Car-PC} and Theorem \ref{thm:Main}.
\end{proof}
\begin{rem}
Note that according to Lemma \ref{lem:3rdCond}, characterizing obstructions
for the principal component $\Theta_{\lambda}$ \eqref{eq:PrinComp3}
to be a~global form, the Carath\'eodory form \eqref{eq:3rdCaratheodoryExpression}
is well-defined for third-order Lagrangians on fibered manifolds,
which satisfy condition \eqref{eq:Obstruction-3rd}. Trivial cases
when \eqref{eq:Obstruction-3rd} holds identically include namely
(i) Lagrangians \emph{independent} of variables $y_{ijk}^{\sigma}$,
and (ii) fibered manifolds with bases endowed by smooth structure
with \emph{linear} chart transformations. An example of (ii) are fibered
manifolds over two-dimensional open \emph{M\"{o}bius strip} (for
details see \cite{UV}).
\end{rem}

Suppose that a~pair $(\lambda,\pi)$, where $\pi:Y\rightarrow X$
is a~fibered manifold and $\lambda\in\Omega_{n,X}^{r}W$ is a Lagrangian
on $W^{r}\subset J^{r}Y$, induces \emph{invariant} principal component
$\Theta_{\lambda}$ \eqref{eq:PoiCar} of a~Lepage equivalent of
$\lambda$, with respect to fibered chart transformations on $W$.
We call $n$-form $\Lambda_{\lambda}$ on $W^{2r-1}$,
\begin{equation}
\Lambda_{\lambda}=\frac{1}{\mathscr{L}^{n-1}}\bigwedge_{j=1}^{n}(-1)^{n-j}i_{d_{n}}\ldots i_{d_{j+1}}i_{d_{j-1}}\ldots i_{d_{1}}\Theta_{\lambda},\label{eq:Caratheodory-rth}
\end{equation}
where $\Theta_{\lambda}$ is given by \eqref{eq:PoiCar}, the \emph{Carath\'eodory
form} associated to Lagrangian $\lambda\in\Omega_{n,X}^{r}W$.

In a~fibered chart $(V,\psi)$, $\psi=(x^{i},y^{\sigma})$, on W,
$\Lambda_{\lambda}$ has an expression

\begin{align}
\Lambda_{\lambda} & =\frac{1}{\mathscr{L}^{n-1}}\bigwedge_{j=1}^{n}\left(\mathscr{L}dx^{j}+\sum_{s=0}^{r-1}\left(\sum_{k=0}^{r-1-s}(-1)^{k}d_{p_{1}}\ldots d_{p_{k}}\frac{\partial\mathscr{L}}{\partial y_{i_{1}\ldots i_{s}p_{1}\ldots p_{k}j}^{\sigma}}\right)\omega_{i_{1}\ldots i_{s}}^{\sigma}\right).\label{eq:HOCaratheodoryExpression}
\end{align}

\section{Example: The Carath\'eodory equivalent of the Hilbert Lagrangian}

Consider a~fibered manifold $\mathrm{Met}X$ of metric fields over
$n$-dimensional manifold $X$ (see \cite{Volna} for geometry of
$\mathrm{Met}X$). In a~chart $(U,\varphi)$, $\varphi=(x^{i})$,
on $X$, section $g:X\supset U\rightarrow\mathrm{Met}X$ is expressed
by $g=g_{ij}dx^{i}\otimes dx^{j}$, where $g_{ij}$ is symmetric and
regular at every point $x\in U$. An induced fibered chart on second
jet prolongation $J^{2}\mathrm{Met}X$ reads $(V,\psi)$, $\psi=(x^{i},g_{jk},g_{jk,l},g_{jk,lm})$.

The \emph{Hilbert Lagrangian }is an odd-base $n$-form defined on
$J^{2}\mathrm{Met}X$ by 
\begin{equation}
\lambda=\mathscr{R}\omega_{0},\label{eq:Hilbert}
\end{equation}
where $\mathscr{R}=R\sqrt{|\det(g_{ij})|}$, $R=R(g_{ij},g_{ij,k},g_{ij,kl})$
is the \emph{scalar curvature} on $J^{2}\mathrm{Met}X$, and $\mu=\sqrt{|\det(g_{ij})|}\omega_{0}$
is the \emph{Riemann volume element}.

The principal Lepage equivalent of $\lambda$ \eqref{eq:Hilbert}
(cf. formula \eqref{eq:Poincare-Cartan-2ndOrder}), reads
\begin{equation}
\Theta_{\lambda}=\mathscr{R}\omega_{0}+\left(\left(\frac{\partial\mathscr{R}}{\partial y_{j}^{\sigma}}-d_{p}\frac{\partial\mathscr{R}}{\partial y_{pj}^{\sigma}}\right)\omega^{\sigma}+\frac{\partial\mathscr{R}}{\partial y_{ij}^{\sigma}}\omega_{i}^{\sigma}\right)\wedge\omega_{j},\label{eq:FLE-Hilbert}
\end{equation}
and it is a~globally defined $n$-form on $J^{1}\mathrm{Met}X$.
\eqref{eq:FLE-Hilbert} was used for analysis of structure of Einstein
equations as a~system of \emph{first-order} partial differential
equations (see \cite{KruStep}). Another Lepage equivalent for a~second-order
Lagrangian in field theory which could be studied in this context
is given by \eqref{eq:2ndCaratheodoryExpression},
\begin{align*}
 & \Lambda_{\lambda}=\frac{1}{\mathscr{R}^{n-1}}\bigwedge_{k=1}^{n}\left(\mathscr{R}dx^{k}+\left(\frac{\partial\mathscr{\mathscr{R}}}{\partial g_{ij,k}}-d_{l}\frac{\partial\mathscr{\mathscr{R}}}{\partial g_{ij,kl}}\right)\omega_{ij}+\frac{\partial\mathscr{\mathscr{R}}}{\partial g_{ij,kl}}\omega_{ij,l}\right),
\end{align*}
where $\omega_{ij}=dg_{ij}-g_{ij,s}dx^{s}$, $\omega_{ij,l}=dg_{ij,l}-g_{ij,ls}dx^{s}$.
Using a~chart expression of the scalar curvature, we obtain 
\begin{align*}
 & \Lambda_{\lambda}=\frac{1}{\mathscr{R}^{n-1}}\bigwedge_{k=1}^{n}\left(\mathscr{R}dx^{k}+\frac{1}{2}\sqrt{g}\left(g^{qp}g^{si}g^{jk}-2g^{sq}g^{pi}g^{jk}+g^{pi}g^{qj}g^{sk}\right)g_{pq,s}\omega_{ij}\right.\\
 & \qquad\qquad\qquad\qquad+\sqrt{g}\left(g^{il}g^{kj}-g^{kl}g^{ji}\right)\omega_{ij,l}\biggr).
\end{align*}
This is the Carath\'eodory equivalent of the Hilbert Lagrangian.

\section*{Acknowledgements}

This work has been completed thanks to the financial support provided
to the VSB-Technical University of Ostrava by the Czech Ministry of
Education, Youth and Sports from the budget for the conceptual development
of science, research, and innovations for the year 2020, Project No.
IP2300031.

\end{document}